\newtheorem{theorem}{Theorem}[section]
\newtheorem{lemma}[theorem]{Lemma}
\theoremstyle{definition}
\newtheorem*{algorithm}{Algorithm}
\newtheorem*{problem}{Problem}
\newtheorem{remark}{Remark}[section]
\numberwithin{equation}{section}
\newtheorem*{examA}{Example~A}
\newtheorem*{examB}{Example~B}
\newtheorem*{numerical example}{Numerical Results}
\numberwithin{equation}{section} \textwidth=16cm \textheight=21cm
\begin{document}
\title[Control of semilinear differential equations with moving singularities%
]{Control of semilinear differential equations with moving singularities}
\author[R. Precup]{Radu Precup}
\address{R. Precup, Faculty of Mathematics and Computer Science and
Institute of Advanced Studies in Science and Technology, Babe\c{s}-Bolyai
University, 400084 Cluj-Napoca, Romania \& Tiberiu Popoviciu Institute of
Numerical Analysis, Romanian Academy, P.O. Box 68-1, 400110 Cluj-Napoca,
Romania}
\email{r.precup@ictp.acad.ro}
\author[A. Stan]{Andrei Stan}
\address{A. Stan, Department of Mathematics, Babe\c{s}-Bolyai University,
400084 Cluj-Napoca, Romania \& Tiberiu Popoviciu Institute of Numerical
Analysis, Romanian Academy, P.O. Box 68-1, 400110 Cluj-Napoca, Romania}
\email{andrei.stan@ubbcluj.ro}
\author[W-S. Du]{Wei-Shih Du}
\address{W-S. Du, Department of Mathematics, National Kaohsiung Normal University,
Kaohsiung 82444, Taiwan}
\email{wsdu@mail.nknu.edu.tw}
\maketitle

\begin{abstract}
In this paper, we present a control problem related to a semilinear differential equation with a moving singularity, i.e., the singular point depends on a parameter. The particularity of the controllability condition resides in the fact that it depends on the singular point which in turn depends on the control variable. We provide sufficient conditions to ensure that the functional determining the control is continuous over the entire
domain of the parameter. Lower and upper solutions technique combined with a bisection algorithm is used to prove the controllability of the equation and to approximate the control. An example is given together with some numerical simulations.  The results naturally extend to fractional differential equations.
\end{abstract}

{\noindent \textbf{Keywords.} Control problem, moving singularity,
differential equation}

{\noindent \textbf{2010 Mathematics Subject Classification.} 34H05, 34B16,
34E15}

\section{Introduction and preliminaries}

Differential equations are crucial in solving practical problems in many scientific fields, such as physics, chemistry, biology, economics, and engineering, etc., modeling  many real-world processes. However,
the complexity of these phenomena often introduces various parameters that can significantly influence the outcome. A particularly intriguing problem in this context is identifying the parameters that ensure a specific quantity (e.g., density, energy) related to the solution of the differential
equation reaches a desired value. This challenge naturally leads to a control problem.

Our study has two strong motivations. 
\begin{description}
    \item[(a)] The first motivation concerns differential equations with moving singularities, which frequently appear in nonlinear models from applied sciences, such as physics and mathematical biology \cite{tores}. 
    
    \item[(b)] The second one relates to the control of such models, aiming to reach a desired state of the process. For example, if the state variable represents a density, one might be interested in controlling its cumulative value or average. This corresponds precisely to our control problem in Section 2.
\end{description}

Mathematical models expressed through equations with singularities include the Briot-Bouquet equation, which has applications in complex analysis, specifically in the theory of univalent functions; equations arising in Michaelis-Menten kinetics, modeling oxygen diffusion in cells; the Thomas-Fermi equation in atomic physics; and the Emden-Fowler equation,  in the study of phenomena in non-Newtonian fluid mechanics \cite{tores,oregan,doi}.

 Inspired by the investigation in \cite{moving1,moving2,movingsolution}, this paper will explore the following problem
\begin{equation}
\begin{cases}
u^{\prime }(t)=f(t,u(t),\lambda ),\quad t\in \left[ 0,\,\theta (\lambda
)\right)  \\
u(0)=u_{0}(\lambda ).
\end{cases}
\label{pb principala}
\end{equation}%
 Here,
 $\ f:[0,\infty)\times\mathbb{R}\times(0,\infty)\rightarrow\Bar{\mathbb{R}}(:=\mathbb{R\cup\{\pm\infty\}})$ is a function that possesses a singularity in the first variable, influenced by the third one, that is, for each $\lambda >0$, there exists $\theta
(\lambda )>0$ such that
\begin{equation*}
\lim_{\substack{ t\rightarrow \,\theta (\lambda ) \\ t<\,\theta (\lambda )}}%
f(t,x,\lambda )=\pm \infty ~\quad \text{for almost all }x\in \mathbb{R}.
\end{equation*}
Throughout this paper, we use $u_{\lambda }$ to denote the unique
solution to problem (\ref{pb principala}) for a given $\lambda >0$.
    Since the singularity point $\theta(\lambda)$ varies  with $\lambda$, the differential equation \eqref{pb principala} is said to be with moving singularity.

For each $\lambda >0$, we consider the functional $\psi _{\lambda }\colon
C[0,\theta (\lambda ))\rightarrow \Bar{\mathbb{R}}$,
\begin{equation}
\psi _{\lambda }(u)=\int_{0}^{\theta (\lambda )}u(s)\,ds,  \label{def psi}
\end{equation}%
where the integration over the noncompact interval $\left[ 0,\,\theta
(\lambda )\right) $ is understood in the usual sense (see, e.g., \cite%
{analizaII}),
\begin{equation*}
\int_{0}^{\theta (\lambda )}u(s)\,ds=\lim_{\substack{ t\rightarrow \theta
(\lambda )  \\ t<\theta (\lambda )}}\int_{0}^{t}u(s)\,ds.
\end{equation*}

Our goal in this paper is the following \textit{control problem}:

\begin{problem}[control problem]
Find $\lambda ^{\ast }>0$ such that
\begin{equation}
\psi _{\lambda ^{\ast }}(u_{\lambda ^{\ast }})=p,  \label{cc}
\end{equation}%
where $p\in \mathbb{R}$ is a given value.
\end{problem}
The particularity of the controllability condition resides in the fact that
it depends on the singular point which in turn depends on the control
variable.
To establish sufficient conditions for the existence of a solution to this
control problem, we first guarantee that the mapping $\varphi \colon \left(
0,\infty \right) \rightarrow \mathbb{R}$,
\begin{equation}
\varphi (\lambda )=\psi _{\lambda }(u_{\lambda }),  \label{phi}
\end{equation}%
is well-defined and continuous on $(0,\infty )$. Then, we are able to use a
lower and upper solution argument to guarantee the existence of a $\lambda
^{\ast }$ with the desired property (\ref{cc}). Moreover, by 
bisection algorithm, we have a method of approximation of the value $\lambda^\ast.$

Control problems on a fixed interval that require to find a parameter in
order to achieve a specific controllability condition are well-documented in
the literature (see, e.g., \cite{pc, pc2, coron}). The novelty of this paper
lies in determining such a parameter where each solution is defined on a
different interval. This approach requires a more refined analysis and leads
to more complex problems. 

The main assumptions we use in our analysis are

\begin{itemize}
\item[\textbf{(h1)}] For each $\lambda >0$ and $0<\varepsilon <\theta
(\lambda )$, there exists a constant $L=L(\lambda ,\varepsilon )$ such that
for all $x,\overline{x}\in \mathbb{R}$ and for all $t\in \left[ 0,\,\theta
(\lambda )-\varepsilon \right] $, we have
\begin{equation*}
|f(t,x,\lambda )-f(t,\overline{x},\lambda )|\leq L(\lambda ,\varepsilon
)\,|x-\overline{x}|.
\end{equation*}

\item[\textbf{(h2)}] The mappings
\begin{equation*}
\lambda \mapsto \theta (\lambda ),\quad \lambda \mapsto u_{0}(\lambda
),\quad \lambda \mapsto L(\lambda ,\varepsilon )\,\,(\text{for each }%
\varepsilon >0),
\end{equation*}%
are continuous. Additionally, the map $(t,x,\lambda )\mapsto f(t,x,\lambda )$
is continuous for $t\in \lbrack 0,\,\theta (\lambda ))$, $x\in \mathbb{R}$,
and $\lambda \in (0,\infty )$.
\end{itemize}

In the next lemma, we show that assumptions (h1), (h2) are sufficient to
guarantee the existence of a unique solution of problem (\ref{pb principala}%
).

\begin{lemma}
Under assumptions \emph{(h1)} and \emph{(h2)}, for each $\lambda >0$, there
exists a unique solution $u_{\lambda }\in C[0,\,\theta (\lambda ))$ of
problem (\emph{\ref{pb principala}}$)$. Moreover, this solution satisfies
the integral equation
\begin{equation}
u_{\lambda }(t)=u_{0}(\lambda )+\int_{0}^{t}f(s,u_{\lambda }(s),\lambda
)\,ds,  \label{forma integrala}
\end{equation}%
for all $t\in \left[ 0,\,\theta (\lambda )\right) .$
\end{lemma}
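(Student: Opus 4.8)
The plan is to reduce the problem on the half-open interval $[0,\theta(\lambda))$ to a family of classical Picard--Lindel\"of problems posed on the compact subintervals $[0,\theta(\lambda)-\varepsilon]$, and then to glue the resulting local solutions together by means of uniqueness. Fix $\lambda>0$ and let $0<\varepsilon<\theta(\lambda)$. Assumption (h2) guarantees that $f(\cdot,\cdot,\lambda)$ is continuous on $[0,\theta(\lambda)-\varepsilon]\times\mathbb{R}$, the singularity being confined to the excluded endpoint $\theta(\lambda)$, while (h1) furnishes a global-in-$x$ Lipschitz constant $L=L(\lambda,\varepsilon)$ valid on this interval. These are precisely the hypotheses of the Cauchy--Lipschitz theorem, so I expect this localization step to be routine.

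First I would set up the fixed point formulation on $C[0,\theta(\lambda)-\varepsilon]$. Define the operator
\[
(T_\varepsilon u)(t)=u_{0}(\lambda)+\int_{0}^{t}f(s,u(s),\lambda)\,ds,
\]
and observe that, owing to the Lipschitz bound (h1), $T_\varepsilon$ is a contraction once $C[0,\theta(\lambda)-\varepsilon]$ is endowed with the Bielecki-type weighted norm $\|u\|_{\ast}=\sup_{t}e^{-2Lt}|u(t)|$, equivalent to the sup norm. The Banach contraction principle then yields a unique fixed point $u_{\lambda}^{\varepsilon}$, which solves the integral equation on $[0,\theta(\lambda)-\varepsilon]$; since the integrand is continuous there, $u_{\lambda}^{\varepsilon}$ is of class $C^{1}$ and is equivalent to a solution of \eqref{pb principala} on that interval.

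Next I would patch these pieces into a single solution on $[0,\theta(\lambda))$. For $0<\varepsilon_{2}<\varepsilon_{1}<\theta(\lambda)$, the restriction of $u_{\lambda}^{\varepsilon_{2}}$ to $[0,\theta(\lambda)-\varepsilon_{1}]$ also solves the integral equation there, so uniqueness forces $u_{\lambda}^{\varepsilon_{2}}|_{[0,\theta(\lambda)-\varepsilon_{1}]}=u_{\lambda}^{\varepsilon_{1}}$. This compatibility lets me define $u_{\lambda}(t):=u_{\lambda}^{\varepsilon}(t)$ for any $\varepsilon$ with $t\le\theta(\lambda)-\varepsilon$; the value is independent of the choice of $\varepsilon$, and $u_{\lambda}$ is continuous on $[0,\theta(\lambda))$ because it is continuous on each compact subinterval of the exhaustion. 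Passing to the limit along this exhaustion shows that $u_{\lambda}$ satisfies \eqref{forma integrala} for every $t\in[0,\theta(\lambda))$, and global uniqueness is inherited from uniqueness on each $[0,\theta(\lambda)-\varepsilon]$, since any two global solutions must agree on every such subinterval.

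The only genuinely delicate point is that the entire construction takes place on the open-ended interval: no claim is made, nor is any needed, about the behaviour of $u_{\lambda}$ or of $\int_{0}^{t}f$ as $t\to\theta(\lambda)$, so the singularity never enters the fixed point argument. Care is required only in checking that the exhaustion $\{[0,\theta(\lambda)-\varepsilon]\}_{\varepsilon\downarrow 0}$ indeed covers $[0,\theta(\lambda))$ and that the Lipschitz constant $L(\lambda,\varepsilon)$, although it may blow up as $\varepsilon\downarrow 0$, remains finite for each fixed $\varepsilon$, which is all that the contraction estimate uses.
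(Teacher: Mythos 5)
Your proposal is correct and follows essentially the same route as the paper: localize to the compact intervals $[0,\theta(\lambda)-\varepsilon]$, invoke the Lipschitz condition from (h1) to get a unique solution there (the paper cites the standard Cauchy--Lipschitz result where you spell out the Bielecki-norm contraction), and glue by uniqueness as $\varepsilon\downarrow 0$. The only difference is the level of detail in the localization step, which is immaterial.
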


\begin{proof}
Let $\lambda >0$. For each $0<\varepsilon <\theta (\lambda )$, we consider
the initial value problem on the cut-off domain,
\begin{equation}
\begin{cases}
u^{\prime }(t)=f(t,u(t),\lambda ), & t\in \left[ 0,\,\theta (\lambda
)-\varepsilon \right]  \\
u(0)=u_{0}(\lambda ). &
\end{cases}
\label{pb_epsilon}
\end{equation}%
From assumption (h1), the mapping $x\mapsto f(t,x,\lambda )$ is Lipschitz
continuous with the Lipschitz constant $L_{\lambda ,\varepsilon }$.
Therefore, problem (\ref{pb_epsilon}) has a unique solution (see, e.g., \cite%
{pp,Coddington}). The conclusion follows immediately by letting $\varepsilon
\rightarrow 0$ and using the uniqueness of the solution on each interval $%
[0,\theta (\lambda )-\varepsilon ]$. Relation (\ref{forma
integrala}) can be easily deduced by taking the integral in (\ref{pb principala})
from $0$ to $t.$
\end{proof}

\begin{examA}
The model for $f(t,x,\lambda )$ is given by
\begin{equation*}
f(t,x,\lambda )=\frac{x}{\lambda -t}.
\end{equation*}%
Clearly, there is a singularity at $\theta (\lambda )=\lambda $. However,
when the first variable is restricted to a compact interval $[0,\lambda
-\varepsilon ]$ with $0<\varepsilon <\theta (\lambda )$, the function $f$ is Lipschitz
continuous with $L(\lambda ,\varepsilon )=\frac{1}{\varepsilon }$ the
Lipschitz constant. If we set $u_{0}(\lambda )=\frac{1}{\lambda }>0$, the
unique solution of the problem \eqref{pb principala} is
\begin{equation*}
u_{\lambda }(t)=\frac{1}{\lambda -t}\,,\ \ \text{for all }t\in \lbrack
0,\lambda ).
\end{equation*}
\end{examA}

\begin{remark}
\label{remarca_phi} Without imposing further assumptions beyond (h1) and
(h2), we cannot generally expect $\varphi $ to be well-defined and
continuous on $(0,\infty )$. For instance, consider the case where
\begin{equation*}
f(t,x,\lambda )=\frac{1}{\lambda (\lambda -t)^{\frac{1}{\lambda }+1}}\quad
\text{and}\quad u_{0}(\lambda )=\frac{1}{\lambda ^{1/\lambda }}.
\end{equation*}%
Clearly, both assumptions (h1) and (h2) are satisfied. Straightforward
calculations yield
\begin{equation*}
u_{\lambda }(t)=(\lambda -t)^{-\frac{1}{\lambda }},
\end{equation*}%
for each $\lambda >1$. Consequently,
\begin{equation*}
\varphi (\lambda )=\frac{1}{\lambda -1}\lambda ^{\frac{2\lambda -1}{\lambda }%
}.
\end{equation*}%
This implies that $\varphi (\lambda )\in \mathbb{R}$ for $\lambda >1$, but
\begin{equation*}
\lim_{\substack{ \lambda \rightarrow 1  \\ \lambda >1}}\varphi (\lambda
)=\infty .
\end{equation*}
\end{remark}

The paper is structured as follows: Section 2 presents the original results on the controllability of equations with moving singularities. Before establishing the main result (Theorem \ref{th 26}), we derive several auxiliary results concerning the continuous dependence of solutions on the control parameter and the continuity of the control functional with respect to the control variable. In Section 3, we provide a theoretical algorithm for obtaining the solution of the control problem using the method of lower and upper solutions. We also provide an example that illustrates the applicability of the obtained results. Finally, in Section 4 we suggest a possible extension of our approach to fractional differential equations with moving singularities.

In the following, we present some well-known results from the literature
that will be used throughout this paper. The first result is the Arzel\`{a}%
-Ascoli theorem (see, e.g., \cite{p,ciarlet}).

\begin{theorem}[Arzel\`{a}-Ascoli theorem]
A subset $F\subset \left( C[a,b],|\cdot |_{\infty }\right) $, where
$|\cdot |_{\infty }$ is the supremum norm, is relatively compact if
and only if it is uniformly bounded and equicontinuous, that is,
there exists $M>0$ with
\begin{equation*}
|u|_{\infty }\leq M,\quad \text{for all }u\in F,
\end{equation*}%
and for every $\varepsilon >0$, there exists $\delta (\varepsilon )>0$ such
that
\begin{equation*}
\left\vert u(x)-u(y)\right\vert \leq \varepsilon ,\quad \text{for }x,y\in
\lbrack a,b]\text{ with }\left\vert x-y\right\vert \leq \delta (\varepsilon )%
\text{ and all }u\in F.
\end{equation*}
\end{theorem}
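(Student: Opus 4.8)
The plan is to prove the two implications separately, working in the complete metric space $(C[a,b],|\cdot|_{\infty })$, where relative compactness of $F$ is equivalent to sequential compactness of its closure, i.e. to the property that every sequence in $F$ admits a uniformly convergent subsequence.

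\emph{Necessity.} Suppose $\overline{F}$ is compact. Being compact, $\overline{F}$ is bounded in the supremum norm, which gives uniform boundedness. For equicontinuity I would exploit total boundedness: given $\varepsilon >0$, cover $\overline{F}$ by finitely many balls of radius $\varepsilon /3$ centered at functions $u_{1},\dots ,u_{n}\in C[a,b]$. Each $u_{i}$ is uniformly continuous on the compact interval $[a,b]$, so there is $\delta _{i}>0$ with $|u_{i}(x)-u_{i}(y)|\leq \varepsilon /3$ whenever $|x-y|\leq \delta _{i}$. Setting $\delta =\min _{i}\delta _{i}$ and, for a given $u\in F$, passing through its nearest center $u_{i}$ via the triangle inequality yields the equicontinuity estimate with the same $\delta $ for all members of $F$.

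\emph{Sufficiency.} This is the substantial direction. Assume $F$ is uniformly bounded and equicontinuous and let $(u_{n})$ be an arbitrary sequence in $F$; I aim to extract a uniformly convergent subsequence. First I would fix a countable dense subset $\{q_{1},q_{2},\dots \}$ of $[a,b]$. Uniform boundedness makes each scalar sequence $(u_{n}(q_{j}))_{n}$ bounded, so Bolzano--Weierstrass together with a Cantor diagonal argument produces a single subsequence $(u_{n_{k}})_{k}$ that converges at every point $q_{j}$. Then I would upgrade this pointwise convergence on the dense set to uniform convergence using equicontinuity: given $\varepsilon >0$, pick $\delta $ from equicontinuity, select finitely many points $q_{j_{1}},\dots ,q_{j_{m}}$ so that every $t\in \lbrack a,b]$ lies within $\delta $ of some $q_{j_{i}}$, and estimate
\[
|u_{n_{k}}(t)-u_{n_{l}}(t)|\leq |u_{n_{k}}(t)-u_{n_{k}}(q_{j_{i}})|+|u_{n_{k}}(q_{j_{i}})-u_{n_{l}}(q_{j_{i}})|+|u_{n_{l}}(q_{j_{i}})-u_{n_{l}}(t)|.
\]
The two outer terms are controlled by equicontinuity and the middle term by convergence at the finitely many selected points, showing that $(u_{n_{k}})$ is uniformly Cauchy. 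By completeness of $C[a,b]$ it converges to some $u\in C[a,b]$, so $F$ is relatively compact.

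The main obstacle is the sufficiency direction, and within it the interplay between the diagonal extraction and the equicontinuity estimate: one must ensure that a single subsequence converges simultaneously at all points of the dense set, and that the finite selection $q_{j_{1}},\dots ,q_{j_{m}}$ transfers control from the dense points to every $t\in \lbrack a,b]$ with constants independent of the index $n$, so that the resulting Cauchy estimate is genuinely uniform.
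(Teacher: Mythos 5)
The paper states the Arzel\`{a}--Ascoli theorem as a known preliminary and cites the literature for it rather than proving it, so there is no in-paper argument to compare against. Your proof is the standard and correct one: necessity via boundedness and total boundedness of the compact closure together with uniform continuity of the finitely many centers of an $\varepsilon/3$-net, and sufficiency via a Cantor diagonal extraction on a countable dense subset of $[a,b]$ followed by the three-term equicontinuity estimate that upgrades pointwise convergence on the dense set to the uniform Cauchy property, with completeness of $C[a,b]$ finishing the argument. The only cosmetic point is that the $\delta$ in the sufficiency step should be chosen for $\varepsilon/3$ so the three terms sum to $\varepsilon$; otherwise the argument is complete.
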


The next lemma provides an alternative condition to ensure the convergence
of a sequence based on the behavior of its subsequences (see, e.g., \cite[%
Lemma~1.1]{herve}).

\begin{theorem}
\label{teorema_sir_covnergent} Let $X$ be a topological space, and let $%
(x_n) $ be a sequence in $X$ with the following property: there exists $x
\in X$ such that from any subsequence of $(x_n)$, a further subsequence can
be extracted that converges to $x$. Then the entire sequence $(x_n)$
converges to $x$.
\end{theorem}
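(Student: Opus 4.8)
The plan is to argue by contradiction, using only the definition of convergence in a topological space so that no extra structure (such as a metric or first countability) is required. Suppose, for contradiction, that the whole sequence $(x_n)$ does not converge to $x$. By the very definition of convergence, this failure means there exists an open neighborhood $U$ of $x$ such that $x_n \notin U$ for infinitely many indices $n$.

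I would then use these infinitely many indices to extract a subsequence $(x_{n_k})$ all of whose terms lie in the complement $X \setminus U$. The hypothesis of the theorem applies to this particular subsequence: from it we can extract a further subsequence $(x_{n_{k_j}})$ that converges to $x$.

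Finally I would derive the contradiction. On the one hand, every term of $(x_{n_{k_j}})$ lies outside $U$, since it is a subsequence of $(x_{n_k})$. On the other hand, because $(x_{n_{k_j}})$ converges to $x$ and $U$ is an open neighborhood of $x$, all but finitely many of its terms must belong to $U$. These two facts are incompatible, so our assumption was false and the entire sequence $(x_n)$ converges to $x$.

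The argument involves no genuine difficulty; the only point requiring care is to rely solely on the necessary condition for convergence, namely that a convergent sequence eventually enters every neighborhood of its limit, rather than on any sequential characterization of the topology. This is precisely what allows the conclusion to hold in an arbitrary topological space, as the statement claims.
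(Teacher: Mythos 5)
Your proof is correct and complete: the negation of convergence yields a neighborhood $U$ of $x$ missed by infinitely many terms, the subsequence formed from those terms can have no further subsequence converging to $x$, and this contradicts the hypothesis. Note that the paper does not prove this statement at all --- it quotes it from the literature (Le Dret, Lemma~1.1) --- so there is nothing to compare against; your argument is the standard one and, as you point out, uses only the definition of convergence in a general topological space.
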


Finally, we have the well known Gr\"{o}nwall's lemma (see, \cite{gronwall,ineq}).

\begin{theorem}[Gr\"{o}nwall inequality]
Let $u,v\in C[a,b]$. If there exists a constant $c>0$ such that
\begin{equation*}
\left\vert u(t)\right\vert \leq c+\int_{a}^{t}|u(s)|\,|v(s)|\,ds~\quad \text{%
for }t\in \lbrack a,b],
\end{equation*}%
then
\begin{equation*}
\left\vert u(t)\right\vert \leq c\exp \left( \int_{a}^{t}|v(s)|\,ds\right)
~\quad \text{for }t\in \lbrack a,b].
\end{equation*}
\end{theorem}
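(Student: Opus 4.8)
The plan is to reduce the integral inequality to a differential inequality for the auxiliary function given by the right-hand side of the hypothesis, and then to integrate. First I would set
\[
w(t) = c + \int_a^t |u(s)|\,|v(s)|\,ds, \qquad t \in [a,b].
\]
Since $u, v \in C[a,b]$, the integrand $|u|\,|v|$ is continuous, so $w \in C^1[a,b]$ with $w(a) = c$ and $w'(t) = |u(t)|\,|v(t)|$. The hypothesis states precisely that $|u(t)| \le w(t)$ for every $t$, and because $|v(t)| \ge 0$ this yields the differential inequality
\[
w'(t) = |u(t)|\,|v(t)| \le w(t)\,|v(t)|, \qquad t \in [a,b].
\]

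The key step is to linearize this inequality with an integrating factor. I would introduce $E(t) = \exp\left(-\int_a^t |v(s)|\,ds\right)$ and consider the product $w(t) E(t)$. Differentiating and using the inequality above,
\[
\frac{d}{dt}\bigl( w(t) E(t) \bigr) = \bigl( w'(t) - w(t)\,|v(t)| \bigr) E(t) \le 0,
\]
so $w E$ is non-increasing on $[a,b]$. Evaluating at $t = a$, where $E(a) = 1$, gives $w(a) E(a) = c$, hence $w(t) E(t) \le c$ for all $t$, that is,
\[
w(t) \le c \exp\left( \int_a^t |v(s)|\,ds \right).
\]
Combining this with $|u(t)| \le w(t)$ delivers the claimed bound.

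The argument is classical and presents no genuine obstacle; the only point requiring care is the legitimacy of the operations on $w$. The continuity of $u$ and $v$ is exactly what guarantees that $w$ is continuously differentiable, so the identity $w' = |u|\,|v|$ and the monotonicity argument hold in the classical sense rather than merely almost everywhere. An equivalent route avoids the integrating factor: since $c > 0$ forces $w(t) \ge c > 0$ throughout, one may divide the differential inequality by $w$ to obtain $w'/w \le |v|$, recognize the left side as $\frac{d}{dt}\ln w(t)$, and integrate from $a$ to $t$. I would nonetheless favor the integrating-factor version, as it sidesteps any discussion of the sign of $w$ and extends verbatim to the limiting case $c = 0$.
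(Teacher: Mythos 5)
Your argument is the standard integrating-factor proof of Gr\"{o}nwall's inequality and it is correct: the auxiliary function $w(t)=c+\int_a^t|u(s)|\,|v(s)|\,ds$ is $C^1$ because $|u|\,|v|$ is continuous, the hypothesis gives $w'\leq w\,|v|$, and the monotonicity of $wE$ with $E(t)=\exp\bigl(-\int_a^t|v(s)|\,ds\bigr)$ yields the bound. Note that the paper does not prove this theorem at all --- it states it as a known result and cites the literature --- so there is no in-paper argument to compare against; your proof is a complete and standard justification of the cited fact.
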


\section{Auxiliary lemmas and new controllability result}
\smallskip
\subsection{Proprieties of the solutions $u_\protect\lambda$}

In this section, we present some properties of the solutions to problem %
\eqref{pb principala}, which will be used later in our analysis.

\begin{lemma}
\label{th1} Assume that assumptions \emph{(h1)} and \emph{(h2)} hold. Then,
given any sequence of positive real numbers $\left( \lambda _{k}\right) $
converging to some $\lambda ^{\ast }>0$, the sequence of solutions $\left(
u_{\lambda _{k}}\right) $ converges uniformly to $u_{\lambda ^{\ast }}$ on
any interval $[0,\,\theta (\lambda ^{\ast })-\varepsilon ]$, with $%
0<\varepsilon <\min \left\{ \inf_{k\in \mathbb{N}}\theta \left( \lambda
_{k}\right) ,\theta (\lambda ^{\ast })\right\} $.
\end{lemma}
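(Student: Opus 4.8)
The plan is to reduce the problem to an application of Gr\"onwall's inequality on a fixed compact interval, exploiting the integral formulation \eqref{forma integrala} together with the continuous dependence of the data on $\lambda$ guaranteed by (h2). Fix $\varepsilon$ as in the statement; since $\varepsilon<\inf_k\theta(\lambda_k)$ and $\varepsilon<\theta(\lambda^\ast)$, every solution $u_{\lambda_k}$ as well as $u_{\lambda^\ast}$ is defined and continuous on the common compact interval $I:=[0,\theta(\lambda^\ast)-\varepsilon]$ (for $k$ large this follows from $\theta(\lambda_k)\to\theta(\lambda^\ast)$, so $\theta(\lambda_k)-\theta(\lambda^\ast)+\varepsilon$ stays bounded below by a positive constant; I would pass to a tail of the sequence if necessary). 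On $I$ the Lipschitz bound (h1) is available with constant $L(\lambda_k,\varepsilon)$, and by (h2) these constants converge to $L(\lambda^\ast,\varepsilon)$, hence are uniformly bounded by some $L_0$ for all large $k$.

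Next I would write both solutions via \eqref{forma integrala} and estimate the difference. For $t\in I$,
\begin{equation*}
u_{\lambda_k}(t)-u_{\lambda^\ast}(t)=u_0(\lambda_k)-u_0(\lambda^\ast)+\int_0^t\bigl(f(s,u_{\lambda_k}(s),\lambda_k)-f(s,u_{\lambda^\ast}(s),\lambda^\ast)\bigr)\,ds.
\end{equation*}
The key step is to split the integrand by inserting the intermediate term $f(s,u_{\lambda^\ast}(s),\lambda_k)$: the piece $f(s,u_{\lambda_k}(s),\lambda_k)-f(s,u_{\lambda^\ast}(s),\lambda_k)$ is controlled by $L_0\,|u_{\lambda_k}(s)-u_{\lambda^\ast}(s)|$ via (h1), while the piece $f(s,u_{\lambda^\ast}(s),\lambda_k)-f(s,u_{\lambda^\ast}(s),\lambda^\ast)$ tends to $0$ uniformly in $s\in I$ thanks to the joint continuity of $f$ in (h2). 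Writing $w_k(t):=|u_{\lambda_k}(t)-u_{\lambda^\ast}(t)|$ and setting $c_k:=|u_0(\lambda_k)-u_0(\lambda^\ast)|+\int_I|f(s,u_{\lambda^\ast}(s),\lambda_k)-f(s,u_{\lambda^\ast}(s),\lambda^\ast)|\,ds$, I obtain
\begin{equation*}
w_k(t)\le c_k+L_0\int_0^t w_k(s)\,ds,\qquad t\in I,
\end{equation*}
and Gr\"onwall's inequality yields $w_k(t)\le c_k\,e^{L_0(\theta(\lambda^\ast)-\varepsilon)}$ on $I$, so $|u_{\lambda_k}-u_{\lambda^\ast}|_\infty\le c_k\,e^{L_0(\theta(\lambda^\ast)-\varepsilon)}$.

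It then remains to show $c_k\to0$. The term $|u_0(\lambda_k)-u_0(\lambda^\ast)|\to0$ is immediate from the continuity of $\lambda\mapsto u_0(\lambda)$ in (h2). The main obstacle is the vanishing of the integral term: I need the convergence $f(s,u_{\lambda^\ast}(s),\lambda_k)\to f(s,u_{\lambda^\ast}(s),\lambda^\ast)$ to be \emph{uniform} on $I$, not merely pointwise. I expect to obtain this by a compactness argument: the map $(s,\mu)\mapsto f(s,u_{\lambda^\ast}(s),\mu)$ is continuous on the compact set $I\times[\lambda^\ast-\delta,\lambda^\ast+\delta]$ (using that $u_{\lambda^\ast}$ is continuous on $I$ and that $s$ stays in $[0,\theta(\mu))$ for $\mu$ near $\lambda^\ast$, which is where care with the domain is needed), hence uniformly continuous there, so the difference over $I$ is dominated by the modulus of continuity evaluated at $|\lambda_k-\lambda^\ast|\to0$. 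This gives $c_k\to0$ and therefore uniform convergence on $I$, completing the proof. The delicate point throughout is keeping the integration domain inside the region where $f$ is defined and jointly continuous; restricting to a tail of $(\lambda_k)$ and to the fixed interval $I$ (chosen via $\varepsilon$) is precisely what makes this legitimate.
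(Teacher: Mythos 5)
Your proof is correct, but it takes a genuinely different and more direct route than the paper's. The paper first shows that the family $\left\{ u_{\lambda _{k}}\right\} $ is uniformly bounded and equicontinuous on $[0,\theta (\lambda ^{\ast })-\varepsilon ]$ (the boundedness via a Bielecki-norm absorption with $\rho >\bar{L}$), extracts a uniformly convergent subsequence by Arzel\`{a}--Ascoli, identifies the limit with $u_{\lambda ^{\ast }}$ by another Bielecki-norm estimate, and finally upgrades subsequential convergence to convergence of the whole sequence via Theorem \ref{teorema_sir_covnergent}. You bypass all of this: the splitting $f(s,u_{\lambda _{k}},\lambda _{k})-f(s,u_{\lambda ^{\ast }},\lambda _{k})+f(s,u_{\lambda ^{\ast }},\lambda _{k})-f(s,u_{\lambda ^{\ast }},\lambda ^{\ast })$ combined with Gr\"{o}nwall gives the quantitative bound $|u_{\lambda _{k}}-u_{\lambda ^{\ast }}|_{\infty }\leq c_{k}\,e^{L_{0}(\theta (\lambda ^{\ast })-\varepsilon )}$ directly, and $c_{k}\rightarrow 0$ follows from uniform continuity of $(s,\mu )\mapsto f(s,u_{\lambda ^{\ast }}(s),\mu )$ on a compact neighbourhood. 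This is shorter, produces an explicit rate, and in fact treats more carefully the one point the paper glosses over: the paper also needs $\int_{0}^{t}|f(s,u_{\lambda ^{\ast }}(s),\lambda _{k_{n}})-f(s,u_{\lambda ^{\ast }}(s),\lambda ^{\ast })|\,ds\rightarrow 0$ and merely asserts it, whereas your compactness argument is the justification. The paper's compactness route would become necessary if (h1) were weakened to mere continuity in $x$; under the stated Lipschitz hypothesis your argument suffices. One small point to tighten: on $I=[0,\theta (\lambda ^{\ast })-\varepsilon ]$, hypothesis (h1) for $\lambda _{k}$ only provides a Lipschitz constant on $[0,\theta (\lambda _{k})-\varepsilon ]$, which need not contain $I$; you should invoke $L\left( \lambda _{k},\tfrac{\varepsilon }{2}\right) $ on $[0,\theta (\lambda _{k})-\tfrac{\varepsilon }{2}]\supseteq I$ for $k$ large (exactly the $\tfrac{\varepsilon }{2}$ device the paper uses), after which the uniform bound $L_{0}$ follows from the continuity of $\lambda \mapsto L\left( \lambda ,\tfrac{\varepsilon }{2}\right) $ in (h2).
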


\begin{proof}
Let $\left( \lambda _{k}\right) $ be a sequence of positive real numbers
with $\lambda _{k}\rightarrow \lambda ^{\ast }>0$ as $k\rightarrow \infty $,
and let
\begin{equation}
0<\varepsilon <\min \left\{ \inf_{k\in \mathbb{N}}\theta \left( \lambda
_{k}\right) ,\theta (\lambda ^{\ast })\right\} .  \label{alegere_epsilon}
\end{equation}%
Since the mapping $\lambda \mapsto \theta (\lambda )$ is continuous, there
exists $k_{0}\in \mathbb{N}$ such that
\begin{equation*}
\theta (\lambda ^{\ast })-\varepsilon \leq \theta \left( \lambda _{k}\right)
-\frac{\varepsilon }{2},
\end{equation*}%
for all $k\geq k_{0}$. Clearly, the choise of $\varepsilon $ given in $%
\left( \ref{alegere_epsilon}\right) $ guarantees that both $\theta (\lambda
^{\ast })-\varepsilon \ $and  $\theta \left( \lambda _{k}\right) -\frac{%
\varepsilon }{2}$ are strictly positive, for all $k\geq k_{0}.$

Using condition (h1), for each $k\geq k_{0}$, there exists $L\left( \lambda
_{k},\frac{\varepsilon }{2}\right) >0$ such that
\begin{equation*}
\left\vert f\left( t,x,\lambda _{k}\right) -f\left( t,\overline{x},\lambda
_{k}\right) \right\vert \leq L\left( \lambda _{k},\frac{\varepsilon }{2}%
\right) \left\vert x-\overline{x}\right\vert
\end{equation*}%
for all $t\in \lbrack 0,~\theta (\lambda _{k})-\varepsilon ]$ and all $x,%
\overline{x}\in \mathbb{R}$. Moreover, from the continuity of the mapping $%
\lambda \mapsto L\left( \lambda ,\frac{\varepsilon }{2}\right) $, there
exists a positive real number $\Bar{L}>0$ such that
\begin{equation*}
L\left( \lambda _{k},\frac{\varepsilon }{2}\right) \leq \Bar{L}\quad \text{%
for all }k\geq k_{0}.
\end{equation*}%
Furthermore, using the Arzel\`{a}-Ascoli theorem, we show that the set
\begin{equation*}
A:=\left\{ u_{\lambda _{k}}:[0,\theta \left( \lambda ^{\ast }\right)
-\varepsilon ]\rightarrow \mathbb{R},\ k\geq k_{0}\right\}
\end{equation*}%
is relatively compact in $C[0,\theta \left( \lambda ^{\ast }\right)
-\varepsilon ]$. First, we establish the uniform boundedness of the set $A$.
Let $k\geq k_{0}$ and let $t\in \lbrack 0,\,\theta \left( \lambda ^{\ast
}\right) -\varepsilon ]$. Using the integral form (\ref{forma integrala}) of
$u_{\lambda _{k}}$, we obtain

\begin{eqnarray}
\left\vert u_{\lambda _{k}}(t)\right\vert  &\leq &\left\vert u_{0}(\lambda
_{k})\right\vert +\int_{0}^{t}\left\vert f(s,u_{\lambda _{k}}(s),\lambda
_{k})-f(s,0,\lambda _{k})\right\vert ds+\int_{0}^{t}\left\vert f(s,0,\lambda
_{k})\right\vert ds  \label{marg_superioara_1} \\
&\leq &\left\vert u_{0}(\lambda _{k})\right\vert +\int_{0}^{t}e^{s\rho
}e^{-s\rho }\Bar{L}\,\left\vert u_{\lambda _{k}}(s)\right\vert
ds+\int_{0}^{t}\left\vert f(s,0,\lambda _{k})\right\vert ds  \notag \\
&\leq &\left\vert u_{\lambda _{k}}\right\vert _{\rho }\frac{\Bar{L}}{\rho }%
e^{t\rho }+M,  \notag
\end{eqnarray}%

\noindent where
\begin{equation*}
\left\vert u\right\vert _{\rho }=\sup_{t\in \lbrack 0,\,\,\theta \left(
\lambda ^{\ast }\right) -\varepsilon ]}e^{-t\rho }\left\vert u(t)\right\vert
,
\end{equation*}%
is the Bielecki norm on $C[0,\theta \left( \lambda ^{\ast }\right)
-\varepsilon ]$, and
\begin{equation*}
M=\sup_{k\geq k_{0}}\left( \left\vert u_{0}(\lambda _{k})\right\vert
+\int_{0}^{\theta \left( \lambda ^{\ast }\right) -\varepsilon }\left\vert
f\left( s,0,\lambda _{k}\right) \right\vert ds\right) .
\end{equation*}%
Note that $M<\infty $. Indeed, the continuity of the mapping $(t,\lambda
)\mapsto f(t,0,\lambda )$ guarantees that it is uniformly bounded on the
compact set
\begin{equation*}
(t,\lambda )\in \left[ 0,\theta \left( \lambda ^{\ast }\right) -\varepsilon %
\right] \times \left[ \inf_{k\geq k_{0}}\lambda _{k},\sup_{k\geq
k_{0}}\lambda _{k}\right] ,
\end{equation*}%
since $\inf_{k\geq k_{0}}\lambda _{k}>0\text{ and }\sup_{k\geq k_{0}}\lambda
_{k}<\infty.$ Dividing (\ref{marg_superioara_1}) by $e^{t\rho }$ and taking
the supremum over $[0,\,\theta \left( \lambda ^{\ast }\right) -\varepsilon ]$%
, we obtain
\begin{equation}
\left\vert u_{\lambda _{k}}\right\vert _{\rho }\leq \frac{\Bar{L}}{\rho }%
\left\vert u_{\lambda _{k}}\right\vert _{\rho }+M.  \label{eq3}
\end{equation}%
Thus, if we choose $\rho >\Bar{L}$, relation \eqref{eq3} ensures the uniform
boundedness of the sequence $\left( u_{\lambda _{k}}\right) _{k\geq k_{0}}$. Let $D$ be an upper bound for this sequence, that is,
\begin{equation*}
D=\sup_{\substack{ t\in \lbrack 0,\,\theta (\lambda ^{\ast })-\varepsilon ]
\\ k\geq k_{0}}}\left\vert u_{\lambda _{k}}(t)\right\vert .
\end{equation*}

\noindent For any $t,\overline{t}\in \lbrack 0,\,\theta (\lambda ^{\ast })-\varepsilon
]$ and $k\geq k_{0}$, one has
\begin{equation*}
\left\vert u_{\lambda _{k}}(t)-u_{\lambda _{k}}(\overline{t})\right\vert
\leq \left\vert \int_{\overline{t}}^{t}\left\vert f(s,u_{\lambda
_{k}}(s),\lambda _{k})\right\vert ds\right\vert \leq \Bar{M}\left\vert t-%
\overline{t}\right\vert ,
\end{equation*}%
where
\begin{equation*}
\Bar{M}=\sup_{\substack{ t\in \lbrack 0,\,\theta (\lambda ^{\ast
})-\varepsilon ]  \\ \left\vert x\right\vert \leq D,\,k\geq k_{0}}}%
\left\vert f(t,x,\lambda _{k})\right\vert .
\end{equation*}%
Consequently, the set $A$ is equicontinuous. By the Arzel\`{a}-Ascoli
theorem, there exists a subsequence $\left( u_{\lambda _{k_{n}}}\right) $ of
$\left( u_{\lambda _{k}}\right) $ that converges uniformly to  $%
u^{\ast }$ on  $[0,\,\theta \left( \lambda ^{\ast }\right) -\varepsilon ]$.

We now show that
\begin{equation*}
u^{\ast }(t)=u_{\lambda ^{\ast }}(t)~\quad \text{for all }t\in \lbrack
0,\,\theta \left( \lambda ^{\ast }\right) -\varepsilon ].
\end{equation*}%
Let $t\in \lbrack 0,\,\theta \left( \lambda ^{\ast }\right) -\varepsilon ]$.
Then
\begin{align}
\left\vert u^{\ast }(t)-u_{\lambda ^{\ast }}(t)\right\vert & \leq \left\vert
u_{\lambda _{k_{n}}}(t)-u_{\lambda ^{\ast }}(t)\right\vert +\left\vert
u_{\lambda _{k_{n}}}(t)-u^{\ast }(t)\right\vert  \label{eq4} \\
& \leq \left\vert u_{0}\left( \lambda _{k_{n}}\right) -u_{0}\left( \lambda
^{\ast }\right) \right\vert +\int_{0}^{t}\left\vert f\left( s,u_{\lambda
_{k_{n}}}(s),\lambda _{k_{n}}\right) -f\left( s,u_{\lambda ^{\ast
}}(s),\lambda ^{\ast }\right) \right\vert \,ds  \notag \\
& \quad +\left\vert u_{\lambda _{k_{n}}}(t)-u^{\ast }(t)\right\vert  \notag
\\
& \leq \int_{0}^{t}\left\vert f\left( s,u_{\lambda _{k_{n}}}(s),\lambda
_{k_{n}}\right) -f\left( s,u_{\lambda ^{\ast }}(s),\lambda _{k_{n}}\right)
\right\vert \,ds  \notag \\
& \quad +\int_{0}^{t}\left\vert f\left( s,u_{\lambda ^{\ast }}(s),\lambda
_{k_{n}}\right) -f\left( s,u_{\lambda ^{\ast }}(s),\lambda ^{\ast }\right)
\right\vert \,ds  \notag \\
& \quad +\left\vert u_{\lambda _{k_{n}}}(t)-u^{\ast }(t)\right\vert
+\left\vert u_{0}\left( \lambda _{k_{n}}\right) -u_{0}\left( \lambda ^{\ast
}\right) \right\vert  \notag \\
& \leq \frac{e^{\rho t}}{\rho }\Bar{L}\left\vert u_{\lambda
_{k_{n}}}-u_{\lambda ^{\ast }}\right\vert _{\rho }+\int_{0}^{t}\left\vert
f\left( s,u_{\lambda ^{\ast }}(s),\lambda _{k_{n}}\right) -f\left(
s,u_{\lambda ^{\ast }}(s),\lambda ^{\ast }\right) \right\vert \,ds  \notag \\
& \quad +\left\vert u_{\lambda _{k_{n}}}(t)-u^{\ast }(t)\right\vert
+\left\vert u_{0}\left( \lambda _{k_{n}}\right) -u_{0}\left( \lambda ^{\ast
}\right) \right\vert .  \notag
\end{align}

\noindent Multiplying (\ref{eq4}) by $e^{-\rho t}$ and taking the supremum over $%
[0,\theta \left( \lambda ^{\ast }\right) -\varepsilon ],$ we find

\begin{eqnarray}
\left\vert u^{\ast }-u_{\lambda ^{\ast }}\right\vert _{\rho } &\leq &\frac{%
\Bar{L}}{\rho }\left\vert u_{\lambda _{k_{n}}}-u_{\lambda ^{\ast
}}\right\vert _{\rho }+\int_{0}^{t}\left\vert f\left( s,u_{\lambda ^{\ast
}}(s),\lambda _{k_n}\right) -f\left( s,u_{\lambda ^{\ast }}(s),\lambda ^{\ast
}\right) \right\vert ds  \label{eq55} \\
&&+\left\vert u_{\lambda _{k_{n}}}-u^{\ast }\right\vert _{\infty
}+\left\vert u_{0}\left( \lambda _{k_{n}}\right) -u_{0}\left( \lambda ^{\ast
}\right) \right\vert .  \notag
\end{eqnarray}%

\noindent Since the last three terms from the right hand side of \eqref{eq55} tends to
zero and
$$\left\vert u_{\lambda _{k_{n}}}-u_{\lambda ^{\ast }}\right\vert
_{\rho }\rightarrow \left\vert u^{\ast }-u_{\lambda ^{\ast }}\right\vert
_{\rho },$$
choosing $\rho >\Bar{L}$, we find that $\left\vert u^{\ast
}-u_{\lambda ^{\ast }}\right\vert _{\rho }=0,$ which means that
\[
u_{\lambda^{\ast}}(t)=u^{\ast}(t)\quad\text{for all }t\in\lbrack0,\,\theta
(\lambda^{\ast})-\varepsilon].
\]

The conclusion of our theorem now follows directly from Theorem \ref%
{teorema_sir_covnergent}. Indeed, for any subsequence $\left( u_{\lambda
_{k}}\right) $, using the same procedure described above, we can extract a
further subsequence that converges to $u_{\lambda ^{\ast }}$ on the interval
$[0,\theta (\lambda ^{\ast })-\varepsilon ]$. Therefore, Theorem \ref%
{teorema_sir_covnergent} applies and guarantees that on $[0,\theta (\lambda
^{\ast })-\varepsilon ]$, the entire sequence $\left( u_{\lambda
_{k}}\right) $ converges to $u_{\lambda ^{\ast }}$.
\end{proof}

For any $\varepsilon > 0$, we denote
\begin{equation*}
B_{\varepsilon } := \theta^{-1} \left( [2\varepsilon, \infty) \right) = \left\{ \lambda > 0 \,:\, \theta(\lambda) \geq 2\varepsilon \right\}.
\end{equation*}
In the following two results, for some $\varepsilon > 0$ and $\lambda \in B_\varepsilon$, we establish certain properties of the solutions $u_\lambda$ over the interval $[0, \theta(\lambda) - \varepsilon]$. Since $\lambda$ belongs to $B_\varepsilon$, it follows that $\theta(\lambda) - \varepsilon \geq \varepsilon > 0$.

\begin{lemma}
\label{lema_aux1} Assume that conditions \emph{(h1)} and \emph{(h2)} hold.
Then, for each $\varepsilon >0$, there is a continuous function
\begin{equation*}
\tau _{\varepsilon }\colon B_{\varepsilon }\rightarrow \left( 0,\infty
\right) ,
\end{equation*}%
such that
\begin{equation*}
\left\vert u_{\lambda }\left( t\right) \right\vert \leq \tau _{\varepsilon
}\left( \lambda \right)\quad \text{for all $t\in \left[ 0,\,\theta (\lambda
)-\varepsilon \right] $},
\end{equation*}%
and every $\lambda \in B_{\varepsilon }.$
\end{lemma}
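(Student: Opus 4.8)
The plan is to derive an explicit a priori bound for $|u_{\lambda}(t)|$ on $[0,\theta(\lambda)-\varepsilon]$ by means of Gr\"{o}nwall's inequality, and then to read off the continuity of the resulting bound from hypotheses (h1)--(h2). Starting from the integral representation \eqref{forma integrala}, I would split the integrand using (h1), namely
\begin{equation*}
|f(s,u_{\lambda}(s),\lambda)|\leq |f(s,u_{\lambda}(s),\lambda)-f(s,0,\lambda)|+|f(s,0,\lambda)|\leq L(\lambda,\varepsilon)\,|u_{\lambda}(s)|+|f(s,0,\lambda)|,
\end{equation*}
valid for $s\in[0,\theta(\lambda)-\varepsilon]$. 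Setting
\begin{equation*}
c(\lambda):=|u_{0}(\lambda)|+\int_{0}^{\theta(\lambda)-\varepsilon}|f(s,0,\lambda)|\,ds,
\end{equation*}
this yields $|u_{\lambda}(t)|\leq c(\lambda)+L(\lambda,\varepsilon)\int_{0}^{t}|u_{\lambda}(s)|\,ds$ for all $t\in[0,\theta(\lambda)-\varepsilon]$. Note $c(\lambda)$ is finite, since the integral is that of a continuous function over a compact interval.

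Applying Gr\"{o}nwall's inequality with $v\equiv L(\lambda,\varepsilon)$ gives
\begin{equation*}
|u_{\lambda}(t)|\leq c(\lambda)\exp\big(L(\lambda,\varepsilon)\,t\big)\leq c(\lambda)\exp\big(L(\lambda,\varepsilon)(\theta(\lambda)-\varepsilon)\big)
\end{equation*}
for every $t\in[0,\theta(\lambda)-\varepsilon]$. I would then define
\begin{equation*}
\tau_{\varepsilon}(\lambda):=\big(1+c(\lambda)\big)\exp\big(L(\lambda,\varepsilon)(\theta(\lambda)-\varepsilon)\big),
\end{equation*}
which is strictly positive (the factor $1+c(\lambda)\geq 1$ and the exponential is positive) and dominates $|u_{\lambda}(t)|$ as required, so that only the continuity of $\tau_{\varepsilon}$ remains to be checked.

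Since $\lambda\mapsto L(\lambda,\varepsilon)$ and $\lambda\mapsto\theta(\lambda)$ are continuous by (h2), and $u_{0}$ is continuous, the only delicate point---and the main obstacle---is the continuity of the map $g(\lambda):=\int_{0}^{\theta(\lambda)-\varepsilon}|f(s,0,\lambda)|\,ds$, which has both a variable upper limit and a variable integrand. Fixing $\lambda_{0}\in B_{\varepsilon}$, I would use continuity of $\theta$ to find a neighborhood of $\lambda_{0}$ on which $\theta(\lambda)>\theta(\lambda_{0})-\varepsilon/2>0$, so that $(s,\lambda)\mapsto f(s,0,\lambda)$ stays away from the singularity and is therefore continuous---hence uniformly continuous and bounded---on a compact rectangle $[0,\theta(\lambda_{0})-\varepsilon/2]\times[\lambda_{0}-\delta,\lambda_{0}+\delta]$. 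Writing
\begin{equation*}
g(\lambda)-g(\lambda_{0})=\int_{0}^{\theta(\lambda_{0})-\varepsilon}\big(|f(s,0,\lambda)|-|f(s,0,\lambda_{0})|\big)\,ds+\int_{\theta(\lambda_{0})-\varepsilon}^{\theta(\lambda)-\varepsilon}|f(s,0,\lambda)|\,ds,
\end{equation*}
the first term tends to $0$ by uniform continuity of $f(\cdot,0,\cdot)$ on the rectangle, while the second is bounded by $|\theta(\lambda)-\theta(\lambda_{0})|\cdot\sup|f|$ and tends to $0$ by continuity of $\theta$. This gives continuity of $g$, hence of $c$, and finally of $\tau_{\varepsilon}$ as a composition of continuous functions with the exponential, completing the proof.
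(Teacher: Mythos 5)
Your proof is correct and follows essentially the same route as the paper: the integral representation \eqref{forma integrala}, the splitting $|f(s,u,\lambda)|\leq L(\lambda,\varepsilon)|u|+|f(s,0,\lambda)|$, Gr\"{o}nwall's inequality, and the resulting bound $c(\lambda)\exp\big(L(\lambda,\varepsilon)(\theta(\lambda)-\varepsilon)\big)$. You are in fact slightly more careful than the paper on two points: you verify explicitly that $\lambda\mapsto\int_{0}^{\theta(\lambda)-\varepsilon}|f(s,0,\lambda)|\,ds$ is continuous (the paper only invokes the continuity of $\theta$ and $L$ and leaves this term implicit), and your factor $1+c(\lambda)$ guarantees that $\tau_{\varepsilon}$ is strictly positive even when $c(\lambda)=0$, matching the stated codomain $(0,\infty)$.
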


\begin{proof}
Let $\varepsilon >0$ be fixed with $B_{\varepsilon }$ nonempty, $\lambda \in
B_{\varepsilon }$ and $t\in \lbrack 0,\,\theta (\lambda )-\varepsilon ]$.
Using the integral form (\ref{forma integrala}) of $u_{\lambda }$,
straightforward computations yield,
\begin{equation*}
\left\vert u_{\lambda }(t)\right\vert \leq M_{1}+\int_{0}^{t}\left\vert
f(s,u_{\lambda }(s),\lambda )-f(s,0,\lambda )\right\vert ,
\end{equation*}%
where
\begin{equation*}
M_{1}=\left\vert u_{0}(\lambda )\right\vert +\int_{0}^{\theta (\lambda
)-\varepsilon }\left\vert f(s,0,\lambda )\right\vert \,ds.
\end{equation*}%
From (h2), there exists $L(\lambda ,\varepsilon )>0$ such that
\begin{equation*}
\left\vert f(s,x,\lambda )-f(s,0,\lambda )\right\vert \leq L(\lambda
,\varepsilon )\left\vert x\right\vert ,
\end{equation*}%
for all $s\in \lbrack 0,\theta (\lambda )-\varepsilon ]$ and all $x\in
\mathbb{R}.$ Thus,
\begin{equation}
\left\vert u_{\lambda }(t)\right\vert \leq M_{1}+L(\lambda ,\varepsilon
)\int_{0}^{t}\left\vert u_{\lambda }(s)\right\vert \,ds.
\label{ineq_u_lambda}
\end{equation}%
Using Gr\"{o}nwall's lemma in (\ref{ineq_u_lambda}) gives
\begin{equation*}
\left\vert u_{\lambda }(t)\right\vert \leq M_{1}e^{(\theta (\lambda
)-\varepsilon )L(\lambda ,\varepsilon )}:=\tau _{\varepsilon }(\lambda )\quad
\text{for all $t\in \lbrack 0,\theta (\lambda )-\varepsilon ]$,}
\end{equation*}%
while the continuity of $\lambda \mapsto \theta (\lambda ),\ L(\lambda
,\varepsilon )$ ensures that $\tau _{\varepsilon }$ is continuous.
\end{proof}

In the next result, we prove an important relation between
the solutions $u_\lambda$ and the integral.

\begin{theorem}
For every $\varepsilon >0$, the mapping $\ \xi \colon B_{\varepsilon
}\rightarrow \mathbb{R}$,
\begin{equation*}
\xi (\lambda ):=\int_{0}^{\theta (\lambda )-\varepsilon }u_{\lambda }(s)\,ds,
\end{equation*}%
is continuous.
\end{theorem}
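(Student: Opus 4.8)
The plan is to prove continuity sequentially. Since $B_{\varepsilon}$ is a subset of $(0,\infty)$ and hence a metric space, it suffices to fix $\lambda^{\ast}\in B_{\varepsilon}$ and an arbitrary sequence $(\lambda_{k})\subseteq B_{\varepsilon}$ with $\lambda_{k}\to\lambda^{\ast}$, and then to show $\xi(\lambda_{k})\to\xi(\lambda^{\ast})$. Write $a_{k}=\theta(\lambda_{k})-\varepsilon$ and $a^{\ast}=\theta(\lambda^{\ast})-\varepsilon$. By (h2) the map $\theta$ is continuous, so $a_{k}\to a^{\ast}$, and since every $\lambda_{k}\in B_{\varepsilon}$ we have $\theta(\lambda_{k})\geq 2\varepsilon$, whence $a_{k}\geq\varepsilon>0$, and likewise $a^{\ast}\geq\varepsilon>0$.

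The key device is to run everything on a slightly larger \emph{safe} interval obtained from the cutoff $\varepsilon/2$ rather than $\varepsilon$. Because $\inf_{k}\theta(\lambda_{k})\geq 2\varepsilon>\varepsilon/2$ and $\theta(\lambda^{\ast})\geq 2\varepsilon>\varepsilon/2$, the hypothesis of Lemma~\ref{th1} is satisfied with the cutoff $\varepsilon/2$, so $u_{\lambda_{k}}\to u_{\lambda^{\ast}}$ uniformly on $I:=[0,\theta(\lambda^{\ast})-\varepsilon/2]$. Moreover, by continuity of $\theta$, for all large $k$ one has $a_{k}=\theta(\lambda_{k})-\varepsilon<\theta(\lambda^{\ast})-\varepsilon/2$, so the moving integration domains $[0,a_{k}]$ are eventually contained in $I$, and the same holds for $[0,a^{\ast}]$.

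Next I would split the difference as
\begin{equation*}
\xi(\lambda_{k})-\xi(\lambda^{\ast})=\int_{0}^{a^{\ast}}\left( u_{\lambda_{k}}(s)-u_{\lambda^{\ast}}(s)\right)ds+\int_{a^{\ast}}^{a_{k}}u_{\lambda_{k}}(s)\,ds,
\end{equation*}
and estimate the two pieces separately. For the first, since $[0,a^{\ast}]\subseteq I$, the uniform convergence on $I$ gives $|\int_{0}^{a^{\ast}}(u_{\lambda_{k}}-u_{\lambda^{\ast}})|\leq a^{\ast}\,|u_{\lambda_{k}}-u_{\lambda^{\ast}}|_{\infty}\to 0$. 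For the boundary term I would invoke Lemma~\ref{lema_aux1} with cutoff $\varepsilon/2$: since $\theta(\lambda_{k})\geq 2\varepsilon\geq\varepsilon$ we have $\lambda_{k}\in B_{\varepsilon/2}$, and for large $k$ the range between $a^{\ast}$ and $a_{k}$ lies in $[0,\theta(\lambda_{k})-\varepsilon/2]$, so $|u_{\lambda_{k}}(s)|\leq\tau_{\varepsilon/2}(\lambda_{k})$ there. Continuity of $\tau_{\varepsilon/2}$ yields $\tau_{\varepsilon/2}(\lambda_{k})\to\tau_{\varepsilon/2}(\lambda^{\ast})$, hence $C:=\sup_{k}\tau_{\varepsilon/2}(\lambda_{k})<\infty$, and therefore $|\int_{a^{\ast}}^{a_{k}}u_{\lambda_{k}}|\leq C\,|a_{k}-a^{\ast}|=C\,|\theta(\lambda_{k})-\theta(\lambda^{\ast})|\to 0$. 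Combining the two estimates gives $\xi(\lambda_{k})\to\xi(\lambda^{\ast})$.

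The main obstacle is the moving endpoint, that is, controlling the boundary term $\int_{a^{\ast}}^{a_{k}}u_{\lambda_{k}}$ in which both the integrand and the interval of integration vary with $k$. A naive direct appeal to Lemma~\ref{th1} fails here, because the uniform convergence it provides is only on $[0,\theta(\lambda^{\ast})-\varepsilon]$, which need not contain $a_{k}$ when $\theta(\lambda_{k})>\theta(\lambda^{\ast})$. The resolution, and the reason the argument closes, is precisely the buffer created by passing to the cutoff $\varepsilon/2$: it makes both the uniform convergence (Lemma~\ref{th1}) and the uniform a~priori bound (Lemma~\ref{lema_aux1}) available on a single interval that eventually engulfs all the shifting domains $[0,a_{k}]$.
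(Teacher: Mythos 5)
Your proof is correct and follows essentially the same route as the paper's: the same splitting of $\xi(\lambda_{k})-\xi(\lambda^{\ast})$ into a fixed-interval difference handled by the uniform convergence of Lemma~\ref{th1} and a moving boundary term handled by the a priori bound of Lemma~\ref{lema_aux1}, with the bound on $\tau$ along the sequence coming from its continuity. Your $\varepsilon/2$ buffer is in fact a small refinement over the paper's version, which applies $\tau_{\varepsilon}(\lambda_{k})$ on the interval between $\theta(\lambda^{\ast})-\varepsilon$ and $\theta(\lambda_{k})-\varepsilon$ even though Lemma~\ref{lema_aux1} with cutoff $\varepsilon$ only controls $u_{\lambda_{k}}$ up to $\theta(\lambda_{k})-\varepsilon$ (an issue when $\theta(\lambda_{k})<\theta(\lambda^{\ast})$); your choice of the smaller cutoff makes the bound valid on the entire shifting range for all large $k$.
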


\begin{proof}
Let $\varepsilon >0$ such that $B_{\varepsilon }$ is nonempty, and let $%
\left( \lambda _{k}\right) \subset B_{\varepsilon }$ be a sequence with $%
\lambda _{k}\rightarrow \lambda $ as $k\rightarrow \infty $. Clearly, $%
\lambda \in B_{\varepsilon }$ as the set $B_{\varepsilon }$ is closed.
Lemma \ref{th1} guarantees that $u_{\lambda _{k}}$ converges uniformly to $%
u_{\lambda }$ on the interval $[0,\,\theta (\lambda )-\varepsilon ]$.
Therefore, using Lemma \ref{lema_aux1}, we have
\begin{eqnarray}
\left\vert \xi \left( \lambda _{k}\right) -\xi (\lambda )\right\vert  &\leq
&\int_{0}^{\theta (\lambda )-\varepsilon }\left\vert u_{\lambda
_{k}}(s)-u_{\lambda }(s)\right\vert ds+\left\vert \int_{\theta (\lambda
)-\varepsilon }^{\theta \left( \lambda _{k}\right) -\varepsilon }\left\vert
u_{\lambda _{k}}(s)\right\vert ds\right\vert   \label{eq66} \\
&\leq &\int_{0}^{\theta (\lambda )-\varepsilon }\left\vert u_{\lambda
_{k}}(s)-u_{\lambda }(s)\right\vert ds+\tau _{\varepsilon }\left( \lambda
_{k}\right) \left\vert \theta \left( \lambda _{k}\right) -\theta (\lambda
)\right\vert .  \notag
\end{eqnarray}%
Since the continuity of $\tau _{\varepsilon }$ ensures that the sequence $%
\left( \tau _{\varepsilon }\left( \lambda _{k}\right) \right) $ is uniformly
bounded, we deduce that the right-hand side of inequality (\ref{eq66})
converges to zero, implying that $\xi \left( \lambda _{k}\right) $ converges
to $\xi (\lambda )$. Finally, since the above result holds for any sequence
within $B_{\varepsilon }$, we conclude our proof.
\end{proof}

\subsection{The continuity of $\protect\varphi$.}

In this subsection, we prove that the mapping $\varphi $ defined in (\ref{phi})
is well defined and continuous. As noted in Remark \ref{remarca_phi},
assumptions (h1) and (h2) alone are insufficient to guarantee that $\varphi $
is well-defined on $(0,\infty )$. Thus, we introduce the next growth
condition on the function $f$,  aligning it with the typical model described in Example B from below.
\begin{enumerate}
\item[\textbf{(h3)}] There exists a constant $a>1$ such that, for all $%
\lambda >0$, one has
\begin{equation*}
\left\vert f(t,x,\lambda )\right\vert \leq \frac{|x|}{a(\theta (\lambda )-t)}%
+C_{\lambda }
\end{equation*}%
for all $t\in \lbrack 0,\theta (\lambda ))$ and all $x\in \mathbb{R}$, where
$C_{\lambda }\geq 0$ and the map $\lambda \mapsto C_{\lambda }$ is
continuous.
\end{enumerate}

\begin{lemma}
\label{lm2} Under assumptions \emph{(h1)-(h3)}, for any sequence of positive
real numbers $\left( \lambda _{k}\right) $ converging to some $\lambda >0$,
and for every $\varepsilon >0$, there exists $\delta =\delta (\varepsilon
)>0 $ and $k_{0}=k\left( \varepsilon \right)$ such that
\begin{equation*}
\left\vert \int_{\theta \left( \lambda _{k}\right) -\delta }^{\theta \left(
\lambda _{k}\right) }u_{\lambda _{k}}(s)\,ds\right\vert <\varepsilon\quad
\text{for all }k\geq k\left( \varepsilon \right) .
\end{equation*}
\end{lemma}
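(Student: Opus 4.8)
The plan is to establish a pointwise bound on $|u_{\lambda_k}(t)|$ near the singular point $\theta(\lambda_k)$ that is integrable up to the singularity and whose constants remain controlled as $\lambda_k \to \lambda$. Everything hinges on exploiting the constant $a>1$ in hypothesis (h3).

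First I would combine the integral form (\ref{forma integrala}) with the growth condition (h3) to obtain, for each fixed $t<\theta(\lambda_k)$,
\[
|u_{\lambda_k}(t)| \leq c_{\lambda_k} + \int_0^t \frac{|u_{\lambda_k}(s)|}{a(\theta(\lambda_k)-s)}\,ds,
\]
where $c_{\lambda_k} = |u_0(\lambda_k)| + C_{\lambda_k}\theta(\lambda_k)$ absorbs the bounded contribution $\int_0^t C_{\lambda_k}\,ds$. For a fixed $t<\theta(\lambda_k)$ the kernel $s\mapsto 1/(a(\theta(\lambda_k)-s))$ is continuous on $[0,t]$, so the Gr\"onwall inequality stated above applies and yields
\[
|u_{\lambda_k}(t)| \leq c_{\lambda_k}\exp\!\left(\int_0^t \frac{ds}{a(\theta(\lambda_k)-s)}\right)
= c_{\lambda_k}\left(\frac{\theta(\lambda_k)}{\theta(\lambda_k)-t}\right)^{1/a}.
\]

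The decisive feature is that $a>1$ forces the exponent $1/a<1$, so the majorant $(\theta(\lambda_k)-t)^{-1/a}$ is integrable up to $\theta(\lambda_k)$. Integrating the bound over the tail (substituting $r=\theta(\lambda_k)-s$) gives
\[
\int_{\theta(\lambda_k)-\delta}^{\theta(\lambda_k)} |u_{\lambda_k}(s)|\,ds
\leq c_{\lambda_k}\,\theta(\lambda_k)^{1/a}\int_0^{\delta} r^{-1/a}\,dr
= c_{\lambda_k}\,\theta(\lambda_k)^{1/a}\,\frac{\delta^{\,1-1/a}}{1-1/a}.
\]
To make this uniform in $k$, I would use that $\lambda_k\to\lambda$ together with the continuity of $\lambda\mapsto u_0(\lambda)$, $\lambda\mapsto C_\lambda$, and $\lambda\mapsto\theta(\lambda)$: the convergent sequences $(c_{\lambda_k})$ and $(\theta(\lambda_k)^{1/a})$ are bounded, say by $C^\ast$ and $\Theta^\ast$ for $k\geq k(\varepsilon)$, and moreover $\theta(\lambda_k)$ stays bounded below by a positive number for such $k$, which guarantees that the interval $[\theta(\lambda_k)-\delta,\theta(\lambda_k)]$ is well defined once $\delta$ is small. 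Hence the right-hand side is at most $C^\ast\Theta^\ast\,\delta^{\,1-1/a}/(1-1/a)$, independent of $k$, and tends to $0$ as $\delta\to 0^+$; choosing $\delta=\delta(\varepsilon)$ small enough to make it $<\varepsilon$ finishes the proof.

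I expect the only genuine obstacle to be the singular Gr\"onwall step: one must justify invoking the stated Gr\"onwall inequality on each $[0,t]$ with $t<\theta(\lambda_k)$, even though the kernel blows up at the right endpoint $\theta(\lambda_k)$, and then verify that the resulting exponent $1/a$ is strictly below $1$. This is precisely where (h3)—and specifically the strict inequality $a>1$ rather than $a\geq 1$—is indispensable: the pathology in Remark \ref{remarca_phi}, where the effective exponent $1/\lambda$ approaches $1$ as $\lambda\to 1$, shows that integrability of the tail fails exactly when this margin is lost.
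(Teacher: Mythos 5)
Your proposal is correct and follows essentially the same route as the paper: the key step in both is the singular Gr\"onwall estimate $|u_{\lambda_k}(t)|\leq \mathcal{C}\,\theta(\lambda_k)^{1/a}(\theta(\lambda_k)-t)^{-1/a}$ obtained from (h3), followed by integrating the tail to get a bound of order $\delta^{(a-1)/a}$ with constants uniform in $k$ by continuity of $u_0$, $C_\lambda$ and $\theta$. The only difference is presentational -- the paper phrases the argument as a proof by contradiction with $\delta_n=1/n$, whereas you argue directly, which is if anything slightly cleaner.
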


\begin{proof}
Assume the contrary. Then, we may find a sequence $\left( \lambda
_{k}\right) $ with $\lambda _{k}>0$ for all $k$, $\lambda _{k}\rightarrow
\lambda >0$ as $k\rightarrow \infty $, and $\varepsilon >0$ such that for
any $\delta ,$ let it be $\delta _{n}=\frac{1}{n}$ and any $k_{0},$ say $%
k_{0}=n$ $\left( n\in \mathbb{N}\right) ,$ there is $k_{n}\geq n$ such that
\begin{equation}
\left\vert \int_{\theta \left( \lambda _{k_{n}}\right) -\delta _{n}}^{\theta
\left( \lambda _{k_{n}}\right) }u_{\lambda _{k_{n}}}(s)\,ds\right\vert \geq
\varepsilon .  \label{f1}
\end{equation}%
From (\ref{forma integrala}) and {(h3)}, one has,
\begin{equation}
\left\vert u_{\lambda _{k_{n}}}(t)\right\vert \leq \mathcal{C}+\int_{0}^{t}%
\frac{|u_{\lambda _{k_{n}}}(s)|}{a(\theta \left( \lambda _{k_{n}}\right) -s)}%
\,ds,\label{2.9}
\end{equation}%
for all $n$ and all $t\in \lbrack 0,\,\theta (\lambda _{k_{n}}))$, where
\begin{equation*}
\mathcal{C}=\sup_{n\in \mathbb{N}}\left\{ |u_{0}(\lambda _{k_{n}})|+\theta
\left( \lambda _{k_{n}}\right) C_{\lambda _{k_{n}}}\right\} <\infty .
\end{equation*}%
Applying Gr\"{o}nwall's lemma to (\ref{2.9}) yields
\begin{align*}
\left\vert u_{\lambda _{k_{n}}}(t)\right\vert & \leq \mathcal{C}\exp \left(
\int_{0}^{t}\frac{1}{a(\theta \left( \lambda _{k_{n}}\right) -s)}\,ds\right)
\\
& =\mathcal{C}\frac{\theta \left( \lambda _{k_{n}}\right) ^{1/a}}{(\theta
\left( \lambda _{k_{n}}\right) -t)^{1/a}},
\end{align*}%
for all $t\in \lbrack 0,\theta \left( \lambda _{k}\right) )$. Consequently,
\begin{equation}
\left\vert \int_{\theta \left( \lambda _{k_{n}}\right) -\delta _{n}}^{\theta
\left( \lambda _{k_{n}}\right) }u_{\lambda _{k_{n}}}(s)\,ds\right\vert \leq
\Tilde{\mathcal{C}}\delta _{n}^{\frac{a-1}{a}},  \label{f2}
\end{equation}%
where
\begin{equation*}
\Tilde{\mathcal{C}}=\mathcal{C}\frac{a}{a-1}\sup_{n\in \mathbb{N}}\,\left\{
\,\theta \left( \lambda _{k_{n}}\right) ^{1/a}\right\} .
\end{equation*}%
The conclusion follows immediately passing to limit in (\ref{f2}) with $%
n\rightarrow \infty $, which leads to a contradiction with (\ref{f1}). The required proof is thus completed.
\end{proof}

\begin{theorem}
\label{th_p} Assume that conditions \emph{(h1)-(h3)} are satisfied. Then,
the mapping $\varphi :\left( 0,\infty \right) \rightarrow
\mathbb{R}
,$
\begin{equation*}
\varphi :\left( 0,\infty \right) \rightarrow
\mathbb{R}, \, \, \varphi (\lambda )=\int_{0}^{\theta (\lambda )}u_{\lambda
}(s)\,ds
\end{equation*}%
is well-defined and continuous.
\end{theorem}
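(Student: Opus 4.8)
The plan is to treat the two claims --- well-definedness and continuity --- separately, both resting on the pointwise majorant for $u_{\lambda}$ obtained by feeding the growth bound (h3) into Gr\"onwall's inequality, exactly as in the proof of Lemma \ref{lm2}.

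First I would fix $\lambda>0$ and verify that $\varphi(\lambda)$ is a finite real number. Inserting (h3) into the integral form (\ref{forma integrala}) and applying Gr\"onwall's lemma reproduces the estimate
\begin{equation*}
|u_{\lambda}(t)|\le \mathcal{C}_{\lambda}\,\frac{\theta(\lambda)^{1/a}}{(\theta(\lambda)-t)^{1/a}},\qquad t\in[0,\theta(\lambda)),
\end{equation*}
with $\mathcal{C}_{\lambda}=|u_{0}(\lambda)|+\theta(\lambda)C_{\lambda}$. Since $a>1$ forces the exponent $1/a<1$, the majorant $s\mapsto(\theta(\lambda)-s)^{-1/a}$ is integrable on $[0,\theta(\lambda))$, so the improper integral defining $\varphi(\lambda)$ converges absolutely; hence $\varphi(\lambda)\in\mathbb{R}$.

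For continuity I would argue sequentially. Fix $\lambda>0$, take an arbitrary sequence $\lambda_{k}\to\lambda$, and split every integral at the moving cutoff $\theta(\cdot)-\delta$, writing
\begin{equation*}
\varphi(\mu)=\underbrace{\int_{0}^{\theta(\mu)-\delta}u_{\mu}(s)\,ds}_{=:~\xi_{\delta}(\mu)}+\underbrace{\int_{\theta(\mu)-\delta}^{\theta(\mu)}u_{\mu}(s)\,ds}_{=:~R_{\delta}(\mu)},
\end{equation*}
where $\xi_{\delta}$ is exactly the map shown continuous in the preceding theorem (with its parameter taken to be $\delta$). Given $\eta>0$, Lemma \ref{lm2} furnishes a $\delta>0$ and an index $k_{0}$ with $|R_{\delta}(\lambda_{k})|<\eta/3$ for all $k\ge k_{0}$; because the tail bound $\widetilde{\mathcal{C}}\,\delta^{(a-1)/a}$ from that lemma is increasing in $\delta$, I may shrink $\delta$ so that in addition $2\delta<\theta(\lambda)$ and, by the same majorant applied to $\lambda$, $|R_{\delta}(\lambda)|<\eta/3$. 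Continuity of $\theta$ from (h2) then yields $\theta(\lambda_{k})\to\theta(\lambda)>2\delta$, so $\lambda_{k},\lambda\in B_{\delta}$ for $k$ large and $\xi_{\delta}$ is well defined there.

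The triangle inequality then gives
\begin{equation*}
|\varphi(\lambda_{k})-\varphi(\lambda)|\le|\xi_{\delta}(\lambda_{k})-\xi_{\delta}(\lambda)|+|R_{\delta}(\lambda_{k})|+|R_{\delta}(\lambda)|,
\end{equation*}
whose last two terms are each below $\eta/3$ and whose first term tends to $0$ by the continuity of $\xi_{\delta}$; hence $\limsup_{k}|\varphi(\lambda_{k})-\varphi(\lambda)|\le 2\eta/3<\eta$. As $\eta$ and the sequence were arbitrary, $\varphi$ is continuous at $\lambda$. The crux --- and the only genuinely delicate point --- is the coupling between the tail estimate and the drifting singular endpoint: one cannot fix a single interval of integration for the main part, so the contribution near $\theta(\lambda_{k})$ must be controlled uniformly along the whole sequence. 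This is precisely what Lemma \ref{lm2} delivers, and its monotone-in-$\delta$ bound is what permits a single choice of $\delta$ to simultaneously suppress the tails of both $\lambda_{k}$ and $\lambda$ while keeping all the relevant parameters inside $B_{\delta}$.
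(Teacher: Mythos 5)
Your proposal is correct and follows essentially the same route as the paper: the well-definedness via the Gr\"onwall majorant $c_{1}(\theta(\lambda)-t)^{-1/a}+c_{2}$ from (h3), and the continuity via the three-way split into the continuous truncated map $\xi$, the tail over $[\theta(\lambda_k)-\delta,\theta(\lambda_k)]$ controlled by Lemma \ref{lm2}, and the tail at $\lambda$ controlled by convergence of the improper integral. Your explicit remarks on the monotonicity of the tail bound in $\delta$ and on keeping $\lambda,\lambda_k\in B_\delta$ only make precise points the paper leaves implicit.
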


\begin{proof}
Note that for each $\lambda >0$, the integral
\begin{equation*}
\int_{0}^{\theta (\lambda )}u_{\lambda }(s)ds
\end{equation*}%
is convergent. Indeed, following the same steps as in the proof of Lemma \ref%
{lm2}, we conclude that for each $\lambda >0$, there are positive real
numbers $c_{1},c_{2}$ such that
\begin{equation}
\left\vert u_{\lambda }(t)\right\vert \leq \frac{c_{1}}{(\theta (\lambda
)-t)^{1/a}}+c_{2},  \label{eq88}
\end{equation}%
for all $t\in \left[ 0,\,\theta (\lambda )\right) .$ Consequently,
integrating relation \eqref{eq88} from $0$ to $\theta (\lambda )$ gives
\begin{equation*}
\int_{0}^{\theta (\lambda )}|u_{\lambda }(s)|ds\leq c_{3}\theta (\lambda )^{%
\frac{a-1}{a}}+c_{4}<\infty ,
\end{equation*}%
where $c_{3},c_{4}$ are constants dependent on $\lambda $. Thus $\varphi $
is well-defined.

To prove its continuity, consider any sequence $(\lambda _{k})$ with $%
\lambda _{k}>0$ for all $k$, $\lambda _{k}\rightarrow \lambda >0$ as $%
k\rightarrow \infty $, and any $\varepsilon >0$. Then, from Lemma \ref{lm2},
there exists $\delta ^{\prime }>0$ and $k_{0}$ such that
\begin{equation}
\left\vert \int_{\theta\left(  \lambda_{k}\right)  -\delta^{\prime}}%
^{\theta\left(  \lambda_{k}\right)  }u_{\lambda_{k}}\left(  s\right)
ds\right\vert <\varepsilon\ \quad\text{for all }k\geq k_{0}.\label{eq7}%
\end{equation}

\noindent Additionally, since the integral given $\varphi \left( \lambda \right) $ is
convergent, we can find $\delta ^{\prime \prime }>0$ such that \cite[pp. 51]%
{analizaII}
\begin{equation}
\left\vert \int_{\theta (\lambda )-\delta ^{\prime \prime }}^{\theta
(\lambda )}u_{\lambda }\left( s\right) ds\right\vert <\varepsilon .
\label{eq6}
\end{equation}%
Denote $\delta :=\min \{\delta ^{\prime },\delta ^{\prime \prime }\}$. Based
on Lemma \ref{lema_aux1}, the mapping
\begin{equation*}
\lambda \mapsto \int_{0}^{\theta (\lambda )-\delta }u_{\lambda }\left(
s\right) ds
\end{equation*}%
is continuous. Consequently, we find $k_{1}$ such that
\begin{equation}
\left\vert \int_{0}^{\theta \left( \lambda _{k}\right) -\delta }u_{\lambda
_{k}}\left( s\right) ds-\int_{0}^{\theta (\lambda )-\delta }u_{\lambda
}\left( s\right) ds\right\vert \leq \varepsilon\quad\text{ for all }k\geq k_{1}.
\label{eq5}
\end{equation}%
Thus, from (\ref{eq7}), (\ref{eq6}), (\ref{eq5}), one has
\begin{align*}
\left\vert \int_{0}^{\theta \left( \lambda _{k}\right) }u_{\lambda
_{k}}\left( s\right) ds-\int_{0}^{\theta (\lambda )}u_{\lambda }\left(
s\right) ds\right\vert & \leq \varepsilon +\left\vert \int_{\theta \left(
\lambda _{k}\right) -\delta }^{\theta \left( \lambda _{k}\right) }u_{\lambda
_{k}}\left( s\right) ds\right\vert +\left\vert \int_{\theta (\lambda
)-\delta }^{\theta (\lambda )}u_{\lambda }\left( s\right) ds\right\vert \\
& \leq 3\varepsilon ,
\end{align*}
for all $k\geq \max \left\{ k_{0},k_{1}\right\} .$ This proves that $\varphi
\left( \lambda _{k}\right) \rightarrow \varphi \left( \lambda \right) .$ The
conclusion is now immediate since the convergent sequence $\left( \lambda
_{k}\right) $ was arbitrarily chosen.
\end{proof}

Theorem \ref{th_p} leads immediately to the following controllability result
when a lower solution and an upper one are known.

\begin{theorem}\label{th 26}
Assume that conditions \emph{(h1)-(h3)} hold and that there exist $%
\underline{\lambda },\overline{\lambda }>0$ such that
\begin{equation*}
\varphi \left( \underline{\lambda }\right) <p\quad \text{and}\quad \varphi
\left( \overline{\lambda }\right) >p.
\end{equation*}%
Then there is $\lambda ^{\ast },$ intermediate between $\underline{\lambda }$
and $\overline{\lambda },$ such that $\psi _{\lambda ^{\ast }}(u_{\lambda
^{\ast }})=p.$
\end{theorem}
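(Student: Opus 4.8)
The plan is to recognize that this statement is an immediate corollary of Theorem \ref{th_p} together with the classical Intermediate Value Theorem. The crucial observation is that, by the very definitions \eqref{def psi} and \eqref{phi}, one has the identity
\begin{equation*}
\varphi(\lambda) = \psi_\lambda(u_\lambda) = \int_0^{\theta(\lambda)} u_\lambda(s)\,ds .
\end{equation*}
Hence the control condition \eqref{cc} that we seek to satisfy, namely $\psi_{\lambda^*}(u_{\lambda^*}) = p$, is literally the scalar equation $\varphi(\lambda^*) = p$. The entire problem therefore reduces to a zero-finding (or level-crossing) statement for the single real function $\varphi$.

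First I would invoke Theorem \ref{th_p}: under assumptions (h1)--(h3), the map $\varphi \colon (0,\infty) \to \mathbb{R}$ is well-defined and continuous. In particular, $\varphi$ is continuous on the compact interval $I$ whose endpoints are $\underline{\lambda}$ and $\overline{\lambda}$. Next, the hypotheses provide $\varphi(\underline{\lambda}) < p$ and $\varphi(\overline{\lambda}) > p$, so the value $p$ lies strictly between $\varphi(\underline{\lambda})$ and $\varphi(\overline{\lambda})$.

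Applying the Intermediate Value Theorem to the continuous function $\varphi$ restricted to $I$, I obtain a point $\lambda^*$ lying strictly between $\underline{\lambda}$ and $\overline{\lambda}$ for which $\varphi(\lambda^*) = p$. Unwinding the identity above, this is exactly $\psi_{\lambda^*}(u_{\lambda^*}) = p$, which is the desired controllability conclusion. The only formal care needed is not to presuppose an ordering of $\underline{\lambda}$ and $\overline{\lambda}$; the Intermediate Value Theorem applies on $I$ regardless of which endpoint is larger.

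I expect no genuine obstacle at this final stage: all the analytical difficulty has already been absorbed into the earlier results. Specifically, the delicate parts---the continuous dependence of the solutions on the parameter in Lemma \ref{th1}, the uniform a priori bound of Lemma \ref{lema_aux1}, and the uniform smallness of the integral near the moving singularity in Lemma \ref{lm2}, which together establish both finiteness and continuity of $\varphi$ in Theorem \ref{th_p}---are precisely what make the moving-singularity feature harmless here. Once $\varphi$ is known to be a continuous real-valued function, the control problem collapses to the elementary intermediate-value argument sketched above, and the bisection algorithm of Section 3 then furnishes a constructive way to locate $\lambda^*$.
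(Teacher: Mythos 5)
Your proposal is correct and matches the paper's own proof, which likewise reduces the controllability condition to the scalar equation $\varphi(\lambda^{\ast})=p$ and applies the intermediate value theorem to the function $\varphi$, already shown to be continuous in Theorem \ref{th_p}. No further comment is needed.
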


\begin{proof}
The conclusion follows directly by applying Darboux's intermediate value
theorem to the continuous function $\varphi $ (see, e.g., \cite[Theorem~4.23]%
{rudin}).
\end{proof}

\section{Approximate solving of the control problem}

Starting from the lower and upper solutions $\underline{\lambda },$ $%
\overline{\lambda },$ one can approximate $\lambda ^{\ast }$ by using the
following algorithm. For a similar use of this method, we refer the reader to \cite{algoritm}.

\begin{algorithm}[Bisection algorithm] \phantom{ } 
\newline
\noindent \textit{Step 0 (initialization)}: $k:=0,\ \underline{\lambda }_{0}:=\underline{%
\lambda },$ $\overline{\lambda }_{0}:=$ $\overline{\lambda }$.

\noindent \textit{Step} $k\ \left( k\geq 1\right) :$ compute $\lambda :=\frac{\underline{%
\lambda }_{k-1}+\overline{\lambda }_{k-1}}{2};$

\begin{itemize}

\item[$\bullet$]
If $\varphi \left( \lambda \right) =p,$ then $\lambda ^{\ast }=\lambda $ and
we are finished;

\item[$\bullet$]
If $\varphi \left( \lambda \right) <p,$ then set $\underline{\lambda }%
_{k}:=\lambda $ and $\overline{\lambda }_{k}:=\overline{\lambda }_{k-1}$ and
repeat Step $k$ with $k:=k+1;$

\item[$\bullet$]
If $\varphi \left( \lambda \right) >p,$ then set $\underline{\lambda }_{k}:=%
\underline{\lambda }_{k-1}$ and $\overline{\lambda }_{k}:=\lambda $ and
repeat Step $k$ with $k:=k+1;$

\end{itemize}

\noindent \textit{Stop criterion}: if $\left\vert \varphi \left( \lambda \right) -p\right\vert
\leq \varepsilon ,$ then $\lambda \simeq \lambda ^{\ast }$ (with error $%
\varepsilon $).

\end{algorithm}
We note that this step-by-step algorithm iteratively approximates the control solution. At each step, based on the obtained feedback, either the subsolution or the supersolution is improved.

\begin{theorem}
Under assumptions \emph{(h1)-(h3)}, the bisection algorithm is convergent to
a solution $\lambda ^{\ast }$ of the control problem.
\end{theorem}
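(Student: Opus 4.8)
The plan is to treat this as the standard convergence analysis of the bisection method, the essential analytic input being the continuity of $\varphi$ established in Theorem \ref{th_p}. Without loss of generality I would assume $\underline{\lambda}<\overline{\lambda}$ (otherwise simply relabel the two seed values), so that the algorithm operates on a genuine interval. If the procedure terminates after finitely many steps, it does so precisely because some midpoint $\lambda$ satisfies $\varphi(\lambda)=p$, and then $\lambda^{\ast}=\lambda$ is an exact solution; hence it suffices to treat the case in which the algorithm runs indefinitely and produces two infinite sequences of endpoints $(\underline{\lambda}_k)$ and $(\overline{\lambda}_k)$.

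The first step is to record, by induction on $k$, the loop invariant
\[
\varphi(\underline{\lambda}_k)<p<\varphi(\overline{\lambda}_k),\qquad \underline{\lambda}_{k-1}\le\underline{\lambda}_k<\overline{\lambda}_k\le\overline{\lambda}_{k-1}.
\]
This is immediate from the three branches of Step $k$: the midpoint $\lambda$ replaces whichever endpoint keeps the sign condition on $\varphi$ intact, and since $\lambda$ lies strictly between the two current endpoints, the ordering is preserved. A second elementary induction then gives the halving of the interval length,
\[
\overline{\lambda}_k-\underline{\lambda}_k=\frac{\overline{\lambda}_0-\underline{\lambda}_0}{2^{k}},
\]
which tends to $0$ as $k\to\infty$.

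It remains to pass to the limit. The sequence $(\underline{\lambda}_k)$ is nondecreasing and bounded above by $\overline{\lambda}_0$, while $(\overline{\lambda}_k)$ is nonincreasing and bounded below by $\underline{\lambda}_0$; both therefore converge, and because their difference tends to $0$ they share a common limit $\lambda^{\ast}\in[\underline{\lambda}_0,\overline{\lambda}_0]\subset(0,\infty)$. In particular $\lambda^{\ast}>0$ lies in the domain on which, by Theorem \ref{th_p}, the functional $\varphi$ is well-defined and continuous. Letting $k\to\infty$ in the invariant and using $\varphi(\underline{\lambda}_k)\to\varphi(\lambda^{\ast})$ together with $\varphi(\overline{\lambda}_k)\to\varphi(\lambda^{\ast})$ yields $\varphi(\lambda^{\ast})\le p$ and $\varphi(\lambda^{\ast})\ge p$ simultaneously, whence $\varphi(\lambda^{\ast})=\psi_{\lambda^{\ast}}(u_{\lambda^{\ast}})=p$, which is exactly the assertion of the control problem. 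I do not anticipate a genuine obstacle here, since the heavy lifting was done in Theorem \ref{th_p}; the only points requiring a little care are the bookkeeping of the finite-termination branch and the verification that the common limit stays strictly positive, so that the continuity of $\varphi$ may legitimately be invoked at $\lambda^{\ast}$.
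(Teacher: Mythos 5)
Your proposal is correct and follows essentially the same route as the paper's proof: both arguments reduce to the standard bisection analysis, using the halving of the interval length to obtain a common limit $\lambda^{\ast}$ of the two monotone endpoint sequences, and then invoking the continuity of $\varphi$ from Theorem \ref{th_p} to pass the strict inequalities $\varphi(\underline{\lambda}_k)<p<\varphi(\overline{\lambda}_k)$ to the limit and conclude $\varphi(\lambda^{\ast})=p$. Your version is slightly more explicit about the finite-termination branch and the positivity of the limit, but these are bookkeeping details already implicit in the paper's argument.
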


\begin{proof}
If the algorithm does not stop after a finite number of steps, then it
generates two bounded and monotone (so convergent) sequences $\left(
\underline{\lambda }_{k}\right) $ and $\left( \overline{\lambda }_{k}\right)
,$ which in addition satisfy%
\begin{equation}
\left\vert \overline{\lambda }_{k}-\underline{\lambda }_{k}\right\vert =%
\frac{\left\vert \overline{\lambda }-\underline{\lambda }\right\vert }{2^{k}}%
\ \ \left( k\geq 0\right) ,  \label{a1}
\end{equation}%
\begin{equation}
\varphi \left( \underline{\lambda }_{k}\right) <p,\ \ \ \varphi \left(
\overline{\lambda }_{k}\right) >p.  \label{a2}
\end{equation}%
From (\ref{a1}), the two sequences have the same limit denoted $\lambda
^{\ast },$ while from (\ref{a2}), in virtue of the continuity Theorem \ref%
{th_p}, we obtain
\begin{equation*}
\varphi \left( \lambda ^{\ast }\right) \leq p\ \ \ \text{and\ \ \ }\varphi
\left( \lambda ^{\ast }\right) \geq p.
\end{equation*}%
Hence $\varphi \left( \lambda ^{\ast }\right) =p$ as desired.
\end{proof}

\begin{examB}
A typical example of function $f$ satisfying conditions (h1)-(h3) is
\begin{equation}\label{f}
f(t,x,\lambda )=\frac{x}{a(\lambda -t)},
\end{equation}%
where $a>1$. Clearly, $\theta (\lambda )=\lambda $. If in addition we take $%
u_{0}(\lambda )=\lambda ^{-\frac{1}{a}}$, we obtain the unique solution of
problem \eqref{pb principala},
\begin{equation*}
u_{\lambda }(t)=\frac{1}{(\lambda -t)^{\frac{1}{a}}}.
\end{equation*}%
Therefore,
\begin{equation*}
\varphi (\lambda )=\int_{0}^{\lambda }u_{\lambda }(s)\,ds=\frac{a}{a-1}%
\lambda ^{\frac{a-1}{a}},
\end{equation*}%
which is well-defined and continuous on $(0,\infty )$.

\begin{figure}[h]
    \centering
    \includegraphics[width=0.8\textwidth]{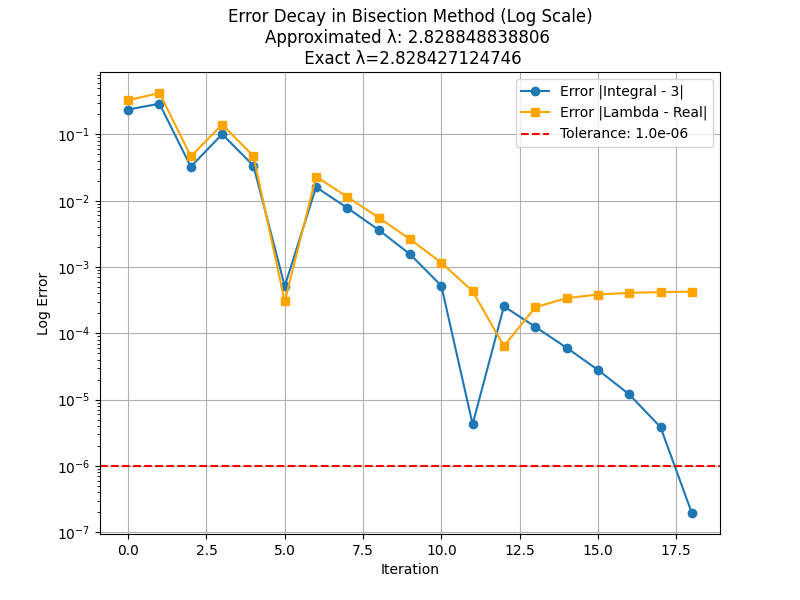}
    \caption{Error decay in the bisection method.}
    \label{fig:bisection_error}
\end{figure}

We conclude this example with some numerical simulations for the function \( f \) defined in \eqref{f}, with \( a = 3 \). Our aim is to determine \( \lambda^\ast \) such that \( \varphi(\lambda^\ast) = 3 \). The exact value is known to be \( \lambda_{\text{exact}} = 2^{\frac{3}{2}} \). For the lower and upper solutions of the control problem, we take \( \underline{\lambda} = 1 \) and \( \overline{\lambda} = 4 \), respectively, while the tolerance \( \varepsilon \) is chosen to be \( \varepsilon=10^{-6} \).

In the Figure 1, the blue curve represents the error between \( \varphi(\lambda) \) and $p=3$, where \( \lambda \) at each step $k$ is $\lambda := \frac{\underline{\lambda}_{k-1} + \overline{\lambda}_{k-1}}{2},$
while the orange curve represents the difference between the calculated value of  \( \lambda \) and the exact value \( \lambda_{\text{exact}}\). After 18 iterations, the approximate value of the control  is found to be
\[
\lambda^\ast = 2.828848838806.
\]

In Figure 2, the graph of \( u_\lambda \) is plotted for the last three  values of \( \lambda \) obtained from the bisection algorithm (those corresponding to the lowest error in the previous figure). 
We see that for $\lambda=2.82$, the graph of the function $u_\lambda$ almost overlaps with  the graph of $u_{\lambda_{\text{exact}}}$.
\begin{figure}[h]
    \centering
    \includegraphics[width=0.8\textwidth]{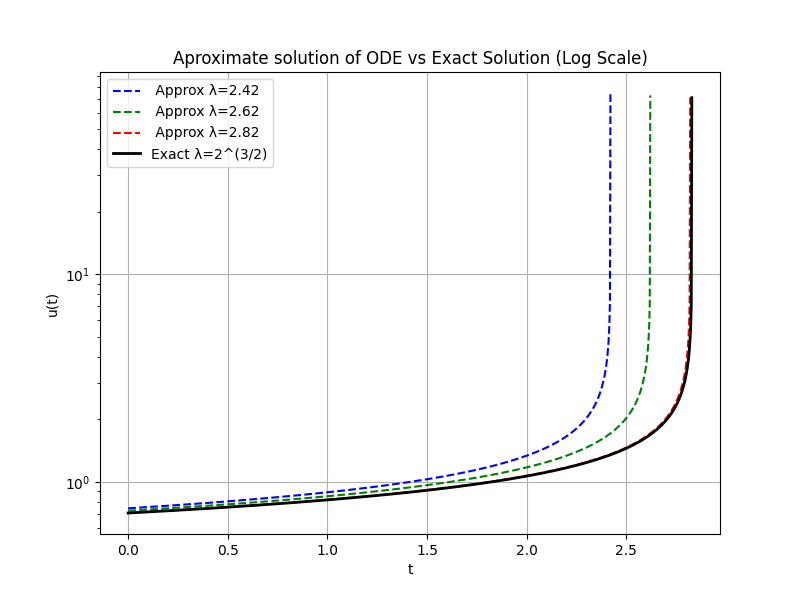}
    \caption{Approximate solution $u_\lambda$ vs exact solution $u_{\lambda_{\text{exact}}}$}
    \label{fig:bisection_error}
\end{figure}

\end{examB}

\begin{remark}
The conclusion of Theorem \ref{th_p} clearly remains valid under assumptions
(h1)-(h3), if instead of the functionals $\psi _{\lambda }$ we consider the
functionals
\begin{equation*}
\Bar{\psi}_{\lambda }(u)=\int_{0}^{\theta (\lambda )}|u(s)|\,ds,
\end{equation*}%
and instead of $\varphi $ we correspondingly take
\begin{equation*}
\Tilde{\varphi}(\lambda )=\int_{0}^{\theta (\lambda )}|u_{\lambda }(s)|\,ds.
\end{equation*}%
Moreover, we can extend this result to $L_{p}$ type functionals of the form
\begin{equation*}
\Tilde{\psi}_{\lambda }(u)=\left( \int_{0}^{\theta (\lambda
)}|u(s)|^{p}\,ds\right) ^{\frac{1}{p}},
\end{equation*}%
where $1\leq p<\infty $. In this case, if we replace (h3) by condition
\end{remark}

\begin{enumerate}
\item[\textbf{(h3')}] There exists a constant $a>p$ such that, for all $%
\lambda >0$, one has
\begin{equation*}
\left\vert f(t,x,\lambda )\right\vert \leq \frac{|x|}{a(\theta (\lambda )-t)}%
+C_{\lambda },
\end{equation*}%
for all $t\in \lbrack 0,\theta (\lambda ))$ and all $x\in \mathbb{R}$, where
$C_{\lambda }\geq 0$ and the map $\lambda \mapsto C_{\lambda }$ is
continuous.
\end{enumerate}

\section{Extension to fractional differential equations}
 The above results can be generalized to fractional differential equations with moving singularities. Such problems more accurately describe various physical, biological, or medical processes (see, e.g., \cite{samko,kilbas,hamani,podlubny}). Therefore, our results related to problem \eqref{pb principala} can be extended to the following problem:\begin{align*}
\begin{cases}
         ^c D^\alpha u(t) = f(t,u(t),\lambda), \quad t \in [0, \theta(\lambda)) \\
     u(0) = u_0(\lambda),
\end{cases}
 \end{align*}
 where $ ^c D^\alpha$ is the Caputo fractional derivative and $0<\alpha<1$.
As shown in the literature, the above problem is equivalent to the Voltera integral equation \begin{equation*}
    u_\lambda(t)=u_0(\lambda)+\frac{1}{\Gamma(\alpha)}\int_0^t (t-s)^{\alpha-1}f(s,u(s),\lambda)ds.
\end{equation*}
Note that, in our case, the control problem remain unchanged, i.e., find $\lambda$ such that \begin{equation*}
    \int_0^{\theta(\lambda)} u_\lambda(s)ds=p.
\end{equation*}
Since our entire analysis is grounded in the integral form of the Cauchy problem, we can easily extend the proof steps to address this more general case. The flexibility of the integral formulation allows for the adaptation of our methods without significant modifications. By imposing conditions similar to those outlined in (h1) and (h2), we can rigorously establish the controllability of the problem.

\section{Conclusions}

The analyzed control problem in this paper is atypical in several aspects: (a) it refers to equations with singularity; (b) the singularity itself depends on the control variable; (c) the controllability condition involves the moving singularity. All these aspects make the analysis much more complex and adapted to the specifics of the problem. The working techniques can also be taken into account for the investigation of other types of singular equations and controllability conditions including singular partial differential equations (see, e.g., \cite{movingsolution}). We believe and anticipate that the ideas and techniques used in this article will have the high degree of suitability for the specifics of each individual problem in future research.


\begin{thebibliography}{999}
\bibitem{tores}
Torres, P.J. {\it Mathematical Models with Singularities:
A Zoo of Singular Creatures};  Atlantis Press, Amsterdam, 2015. 
\bibitem{oregan} O'Regan, D. {\it Theory of Singular Boundary Value Problems}; World Scientiﬁc Publishing, River Edge, NJ,. USA, 1994.

\bibitem{doi} Callegari, A; Nachman, A. A nonlinear singular boundary value problem in the theory of pseudoplasitc fluids. \textit{SIAM J. Appl. Math.} \textbf{1980}, 38, 275--282.



\bibitem{moving1} Gingold, H. Rosenblat, S. Differential equations with
moving singularities. {\it SIAM J. Math. Anal} {\bf 1976}, 7(6), 942--957.

\bibitem{moving2} Gingold, H. Introduction to differential equations with
moving singularities. {\it Rocky Mountain J. Math.} {\bf 1976},
6(4), 571--574.

\bibitem{movingsolution} Fila, M; Takahashi, J; Yanagida, E. Solutions
with moving singularities for a one-dimensional nonlinear diffusion
equation. {\it Math. Ann.} {\bf 2024},
https://doi.org/10.1007/s00208-024-02882-0.




\bibitem{analizaII} Nicolescu, M; Dinculeanu, N; Marcus, S. {\it Manual de
analiz\u{a} matematic\u{a}a (II)}; Editura Didactic\u{a} \c{s}i Pedagogic\u{a}%
: Bucure\c{s}ti, Romania, 1964. (In Romanian)



\bibitem{pc} Haplea, I.\c{S}; Parajdi, L.G; Precup, R. On the
controllability of a system modeling cell dynamics related to
leukemia. {\it Symmetry} {\bf 2021}, 13, 1867.

\bibitem{pc2} Precup, R. On some applications of the controllability
principle for fixed point equations. {\it Results Appl. Math. } {\bf
2022}, 13, 100236.

\bibitem{coron} Coron, J.M. {\it Control and Nonlinearity}; AMS: Providence, 2007.


\bibitem{Coddington} Coddington, E.A. {\it An Introduction to Ordinary
Differential Equations}; Dover: New York, 1961.
\bibitem{pp} Precup, R. {\it Ordinary Differential Equations}; De Gruyter: Berlin,
2018.
\bibitem{p} O'Regan, D; Precup, R. {\it Theorems of Leray-Schauder Type and
Applications}; CRC Press, 2002.


\bibitem{ciarlet} Ciarlet, G. {\it Linear and Nonlinear Functional Analysis with
Applications}; SIAM: Philadelphia, 2013.




\bibitem{herve} Le Dret, H. {\it Nonlinear Elliptic Partial Differential
Equations}; Springer: Berlin, 2018.

\bibitem{gronwall} Gronwall, T.H. Note on the derivatives with respect to a
parameter of the solutions of a system of differential equations.
{\it Ann. Math.} {\bf 1919}, 20(4), 292--296.


\bibitem{ineq} Pachpatte, B.G.; Ames, W.F. {\it Inequalities for Differential
and Integral Equations}; Academic Press, 1997.






\bibitem{rudin} Rudin, W. {\it Principles of Mathematical Analysis}; MacGraw-Hill,
1976.


\bibitem{algoritm} Parajdi, L.G; Precup, R; Haplea, I.S. A method of lower and upper solutions for control problems and application to a model of bone marrow transplantation. {\it Int. J. Appl. Math. Comput. Sci}. \textbf{2023}, 33(3), 409 - 418. 

\bibitem{samko} Samko, S.G.; Kilbas, A.A.; Marichev, O.I. {\it Fractional Integrals and Derivatives: Theory and Applications}; Gordon and Breach Science Publishers: Yverdon, 1993.

\bibitem{kilbas} Kilbas, A.A.; Trujillo, J.J. Differentiale equations of fractional order: methods, results and problems-I. {\it Appl. Anal.} {\bf 2001}, 78, 153--192.
\bibitem{hamani} Benchohra, M.; Hamani, S.; Ntouyas, S.K. Boundary value problems for differential equations with fractional order and nonlocal conditions. {\it Nonlinear Anal.} {\bf 2009}, 71(7-8), 2391--2396.

\bibitem{podlubny} Podlubny, I. {\it Fractional Differential Equations}; Mathematics in Sciences and Engineering, Vol. 198; Academic Press: San Diego, 1999.


\end{thebibliography}
\end{document}